\renewcommand{\P}{\mathbb P}
\renewcommand \phi\varphi
\numberwithin{equation}{section}
\theoremstyle{plain} 
\newtheorem{thm}[equation]{Theorem}
\newtheorem{lem}[equation]{Lemma}
\newtheorem{prop}[equation]{Proposition}
\theoremstyle{definition}
\newtheorem{defn}[equation]{Definition}
\theoremstyle{remark}
\newtheorem{notation}[equation]{Notation}
\begin{document}

\title{A presentation theorem for smooth projective schemes over discrete valuation rings}

\author{Ning Guo}
\author{Ivan Panin}
\address{St. Petersburg branch of V. A. Steklov Mathematical Institute, Fontanka 27, 191023 St. Petersburg, Russia}
\email{guo.ning@eimi.ru}
\email{paniniv@gmail.com}
\date{\today}
\subjclass[2010]{Primary 14F22; Secondary 14F20, 14G22, 16K50.}
\keywords{principal bundle, torsor, group scheme, geometric presentation}

\begin{abstract}
In this article, we give a proof for a geometric presentation theorem for any irreducible scheme $X$ smooth projective over a discrete valuation ring $R$. 
As a consequence, for any reductive $R$-group scheme $\mathbf{G}$, we prove that any generically trivial principal $\mathbf{G}$-bundle over $X$ glued to a principal $\mathbf{G}_U$-bundle over the affine line $\mathbb{A}^1_U$ for a semilocal affine scheme $U$.
\end{abstract}
\maketitle






\section{Introduction}
Let $R$ be a discrete valuation ring and  let $X$ be an irreducible scheme smooth over $R$. 
When $R$ contains a field and $X$ is projective over $R$, in \cite{Pan20a} and \cite{PSV15}, two geometric presentation theorems were proved respectively. 
The goal of this article is to remove the equicharacteristic condition on $R$.
The following Theorems \ref{Moving_for_DVR} and \ref{MainHomotopy} are the main results: the first is the geometric presentation theorem and the second is its application to descending principal bundles under reductive $R$-group schemes and in particular the Grothendieck--Serre conjecture concerning principal bundles, see \cite{Pan19a}, \cite{Pan19b}.
\vskip 0.1cm
\begin{defn}
For a regular scheme $S$, let $\mathbf{Sm}_S$ be the category of $S$-smooth schemes. 
A \emph{Nisnevich square} in $\mathbf{Sm}_S$ is a commutative diagram of the  form 
\[
\xymatrix{
    W \ar@{^{(}->}[d] &  V \ar[l] \ar@{^{(}->}[d] \\
    X  & Y \ar[l]
    }
\]
that is Cartesian and such that two vertical arrows are open embeddings, two horizontal arrows are \'etale, and there is an isomorphism $(X- W)_{\mathrm{red}}\cong (Y- V)_{\mathrm{red}}$.
\end{defn}

Let $S$ be an affine scheme and $O_S=\Gamma(S,\mathcal O_S)$. 
In this paper, an \emph{affine} $S$-scheme is a morphism 
$T\to S$ with an affine scheme $T$ such that $O_T$ is a finitely generated $O_S$-algebra. 

\begin{thm}\label{Moving_for_DVR}
Let $R$ be a discrete valuation ring and let $X$ be an $R$-smooth $R$-projective irreducible $R$-scheme of pure
relative dimension $r$. Let ${\bf x}\subset X$ be a finite subset of closed points. Put 
$U=\mathrm{Spec} (\mathcal O_{X,{\bf x}})$.
Let $Z\subset X$ be a closed subset 
avoiding all generic
points of the closed fibre of $X$.  
Then there is 
an affine $U$-smooth scheme $\mathcal X$ of relative dimension one
with a closed subset ${\mathcal Z}\subset {\mathcal X}$ 
fitting into the following commutative diagram
\begin{equation}
\label{diag:A1_homotopy}
    \xymatrix{
     \mathbb A^1_U - \tau(\mathcal Z) \ar[d]_{\mathrm{inc}_{\P}}           &&  \mathcal X- \mathcal Z \ar[ll]^{\tau|_{\mathcal X- \mathcal Z}} \ar[d]_{\mathrm{Inc}} \ar[rr]^{p_{X-Z}} && X-Z \ar[d]_{\mathrm{inc}} &  \\
     \mathbb A^1_U                 && \mathcal X \ar[ll]^{\tau}  \ar[rr]^{p_X} && X  &\\
                          && \mathcal Z \ar[llu]^{\tau|_{\mathcal Z}} \ar[u]^{I} \ar[rr]^{p_Z} && Z \ar[u]^{i} &\\    }
\end{equation}
and subjecting the conditions listed below \\
(i) $\tau$ is an \'{e}tale morphism such that $\tau|_{\mathcal Z} : {\mathcal Z}\to \mathbb  A^1_U$ is a closed embedding;\\
(ii) the left square is a Nisnevich square in $\mathbf{Sm}_U$;\\
(iii) both right-hand side squares are commutative; \\
(iv) the structure morphism $p_U : {\mathcal X}\to U$ defined as $\mathrm{pr}_U\circ \tau$ permits a $U$-section
$\Delta: U \to \mathcal X$;\\
(v) we have $p_X \circ \Delta=\mathrm{can}$, where $\mathrm{can}: U \hookrightarrow X$ is the open immersion;\\
(vi) the $i_0:=\tau \circ \Delta$ is the zero section $i_0: U \to \mathbb A^1_U$
of the projection $\mathrm{pr}_U : \mathbb A^1_U\to U$; \\
(vii) $\mathcal Z$ is $U$-finite; \\
(viii) $\tau(\mathcal Z)\cap \{1\}\times U=\emptyset$.
\end{thm}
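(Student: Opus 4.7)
The plan is to construct $\mathcal{X}$ as an affine open subscheme of a fibre product $X^\circ \times_Y U$, where $Y=\mathbb{P}^{r-1}_R$ is the target of a well-chosen projection $\pi\colon X \to Y$ with one-dimensional fibres, $X^\circ \subset X$ is an affine open containing $\mathbf{x}$ and $Z$, and the étale morphism $\tau$ is obtained from an auxiliary function $p_1\in \mathcal{O}(X^\circ)$ so that the combined map $(\pi,p_1)$ is étale along $U \cup Z$. Once the right $\pi$ and $p_1$ are in hand, conditions (i)--(viii) follow formally from suitable transversality and separation properties; the entire difficulty of the theorem lies in producing such $\pi$ and $p_1$ in mixed characteristic.

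\emph{Producing a good projection.} Embed $X$ in a projective space $\mathbb{P}^N_R$ by a high power of an ample line bundle, and seek a linear projection $\pi\colon X \to Y=\mathbb{P}^{r-1}_R$ satisfying: (a) $\pi$ is étale in a Zariski neighbourhood of $\mathbf{x}$, hence smooth along $U$; (b) $\pi|_Z$ is a closed immersion; (c) $\pi^{-1}(\pi(Z)) \cap U = Z\cap U$ scheme-theoretically. The hypothesis that $Z$ avoids the generic points of the closed fibre is essential here: combined with the analogous property on the generic fibre, it gives $\dim Z_s < \dim X_s$ and $\dim Z_\eta < \dim X_\eta$, which in turn forces the loci of ``bad'' projections to have positive codimension on both fibres. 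Over a DVR whose residue field may be finite the naive Bertini argument fails, and one applies a Gabber-type trick: passing to a Veronese reembedding of $X$, i.e.\ working with sections of $\mathcal{O}(d)$ for $d\gg 0$, enlarges the parameter space of admissible projections sufficiently that the open conditions (a)--(c) can be realized simultaneously on both fibres.

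\emph{Choosing $p_1$ and assembling $\mathcal{X}$.} A parallel Gabber--Bertini argument, applied to a section of a sufficient power of an ample line bundle and trivialized on an affine chart $X^\circ\subset X$ containing $\mathbf{x}\cup Z$, yields $p_1\colon X^\circ \to \mathbb{A}^1_R$ such that $q:=(\pi,p_1)\colon X^\circ \to Y \times_R \mathbb{A}^1_R$ is étale at every point of $U\cup Z$, with $q|_Z$ a closed immersion, and such that $p_1$ nowhere takes the value $1$ on $Z\cap\pi^{-1}(\pi(\mathbf{x}))$. Define $\mathcal{X}$ to be a suitable affine open neighbourhood of $\Delta(U)\cup(Z\times_Y U)$ inside $X^\circ\times_Y U$ on which the morphism
\[
\tau := \bigl(\,p_X^\ast p_1 - p_U^\ast(p_1\circ\mathrm{can}),\; p_U\,\bigr)\colon X^\circ\times_Y U \longrightarrow \mathbb{A}^1_U
\]
is étale, where $p_X,p_U$ are the two projections and the shift by the $p_U$-constant $p_1\circ\mathrm{can}\in\mathcal{O}_{X,\mathbf{x}}$ enforces (vi) without affecting étaleness. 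Set $\mathcal{Z}:=\mathcal{X}\cap(Z\times_Y U)$ and $\Delta:=(\mathrm{can},\mathrm{id}_U)$. Conditions (iii)--(v) and (vii) are immediate from the fibre-product structure, (i) and the Nisnevich-square condition (ii) reduce to the étaleness and fibre-separation properties of $q$ arranged above, and (viii) is the non-$1$ property of $p_1$ along $\mathcal{Z}$.

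\emph{Main obstacle.} The technical heart is executing the Gabber--Bertini arguments above over a DVR, controlling the generic and closed fibres simultaneously. In the equicharacteristic setting of \cite{Pan20a, PSV15} one ultimately reduces to a single ground field; here, one must carry out the dimension counts in the relative setting, relying crucially on the hypothesis that $Z$ avoids the generic points of the closed fibre to ensure that the ``bad'' loci in the parameter space of projections (resp.\ of $p_1$'s) have positive codimension on every fibre. Once a single pair $(\pi,p_1)$ meeting all the open conditions on both fibres is produced, the remainder of the construction is a formal fibre-product computation.
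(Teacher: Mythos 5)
Your overall strategy---project $X$ to a base of dimension $r-1$ so that the fibres are curves, pull back to $U$, and then map the resulting relative curve $\mathcal X$ into $\mathbb A^1_U$ via a well-chosen function while keeping track of the section $\Delta$---is the same strategy the paper follows (it uses a weak elementary fibration from \cite{GPan2} in place of a raw linear projection, but the role is identical). However, there is a genuine gap at the step where you produce $p_1$ (equivalently, $\tau$) such that $\tau|_{\mathcal Z}\colon \mathcal Z\hookrightarrow \mathbb A^1_U$ is a closed embedding.

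The obstruction here is not a Bertini-type transversality issue, so the Gabber/Poonen trick of passing to high Veronese reembeddings to enlarge the space of admissible projections does not help. It is a cardinality obstruction on closed fibres: if $u\in U$ is a closed point with finite residue field $\kappa(u)$ and $\mathcal Z_u$ has more points of a given degree $d$ than $\mathbb A^1_{\kappa(u)}$ does, then there is simply no closed embedding $\mathcal Z_u\hookrightarrow\mathbb A^1_{\kappa(u)}$, hence no admissible $p_1$ at all. The paper makes this explicit: three disjoint copies of $\mathrm{Spec}\,\mathbb F_2$ cannot be embedded into $\mathbb A^1_{\mathbb F_2}$, which has only two rational points. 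This situation does arise for the $\mathcal Z$ produced by a projection---one can arrange $Z$ so that every $R$-rational pencil through $\mathbf x$ meets $Z$ in too many low-degree points near $\mathbf x$---and no Veronese reembedding changes the finite $U$-scheme $\mathcal Z$ or its fibrewise residue fields. The paper's fix (Section~3, Lemmas~\ref{extend-points-local} and~\ref{overcome}) is categorically different: it replaces the relative curve $\mathcal X$ by a \emph{finite \'etale cover} $\tilde{\mathcal X}\to\mathcal X$, chosen by adjoining roots of a carefully constructed polynomial $F$, so that the preimage $\tilde{\mathcal Z}$ of $\mathcal Z\cup\Delta(U)$ has residue-field data which \emph{do} satisfy the counting inequality, while the section $\Delta$ lifts to $\Delta'$. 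Only after this replacement does the construction of $\tau$ (your $p_1$) via ample-line-bundle sections go through, and this is what Proposition~\ref{map-to-affine-line} does. Your proposal has no analogue of this cover, and without it the closed-embedding requirement in condition~(i) of the theorem cannot in general be met; the rest of the formal fibre-product bookkeeping you describe is fine once that step is supplied.
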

The following descent of principal bundles is a direct consequence of Theorem \ref{Moving_for_DVR}. 
By using this result, we prove the Grothendieck--Serre on smooth projective schemse over DVRs, see \cite{GPan3} and \cite{PanSt}*{Thms.~1.1 and 1.2}.

\begin{thm}
\label{MainHomotopy}
Let $R$, $X$, ${\bf x}\subset X$, $r$, $U=\mathrm{Spec} (\mathcal O_{X,\bf x})$ be as in Theorem \ref{Moving_for_DVR}. 
Let ${\bf G}$ be a reductive $R$-group scheme and 
$\mathcal G/X$ be a principal ${\bf G}$-bundle 
trivial over the generic point $\eta$ of $X$.
Then there exist a principal ${\bf G}$-bundle $\mathcal G_t$ over $\mathbb A^1_U$
and a closed subset $\mathcal Z'$ in $\mathbb A^1_U$ finite over $U$
such that 
\begin{itemize}
\item[(i)] the $\mathbf{G}$-bundle $\mathcal{G}_t$ is trivial over the open subscheme $\mathbb A^1_{\mathcal Z'}-\mathcal Z'$ in $\mathbb A^1_U$;
\item[(ii)] the restriction of $\mathcal{G}_t$ to $\{0\}\times U$ coincides with the restriction of $\mathcal{G}$ to $U$ via $\mathrm{can}^{\ast}$; and
\item[(iii)]  $\mathcal{Z}'\cap (\{1\}\times U)=\emptyset$.
\end{itemize}
\end{thm}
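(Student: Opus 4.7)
The plan is to combine Theorem~\ref{Moving_for_DVR} with Nisnevich patching applied to a suitable trivialization locus of $\mathcal G$.

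The first step is to produce a closed subset $Z\subset X$ such that $\mathcal G|_{X-Z}$ is trivial while $Z$ avoids each generic point $\xi_1,\dots,\xi_m$ of the closed fibre of $X$. Let $A=\mathcal O_{X,\xi_1,\dots,\xi_m}$; since $X$ is regular and each $\xi_i$ has codimension one in $X$, $A$ is a one-dimensional semilocal regular ring whose localizations at the $\xi_i$ are DVRs with common fraction field $k(\eta)$. Nisnevich's theorem (Grothendieck--Serre for DVRs), combined with a gluing of the local trivializations at the $\xi_i$, then implies that $\mathcal G|_{\mathrm{Spec}\,A}$ is trivial. By spreading out a chosen trivialization over $\mathrm{Spec}\,A$, I obtain an open $V\subset X$ with $V\supset\{\eta,\xi_1,\dots,\xi_m\}$ and $\mathcal G|_V$ trivial. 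Set $Z:=X\setminus V$ and fix the resulting trivialization $\phi:\mathcal G|_{X-Z}\isom \mathbf G\times(X-Z)$.

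Next, apply Theorem~\ref{Moving_for_DVR} to this $Z$ to obtain the scheme $\mathcal X$, the closed subset $\mathcal Z$, and the morphisms $\tau$, $p_X$, $\Delta$ fitting into diagram~\eqref{diag:A1_homotopy}. On $\mathcal X$ form $p_X^*\mathcal G$, whose restriction to $\mathcal X-\mathcal Z$ inherits via $\phi$ a canonical trivialization. On $\mathbb A^1_U$ take the trivial $\mathbf G$-bundle, tautologically trivial on $\mathbb A^1_U-\tau(\mathcal Z)$. Since the left square of \eqref{diag:A1_homotopy} is a Nisnevich square in $\mathbf{Sm}_U$ by condition (ii) and principal bundles under the smooth group scheme $\mathbf G$ form a stack for the Nisnevich topology, these two data glue along the identity of trivial bundles on $\mathcal X-\mathcal Z$ into a principal $\mathbf G$-bundle $\mathcal G_t$ on $\mathbb A^1_U$ equipped with canonical isomorphisms
\[
\mathcal G_t|_{\mathbb A^1_U-\tau(\mathcal Z)}\simeq \mathrm{triv},\qquad \tau^*\mathcal G_t\simeq p_X^*\mathcal G.
\]
Set $\mathcal Z':=\tau(\mathcal Z)$; by (i) it is a closed subscheme of $\mathbb A^1_U$ isomorphic to $\mathcal Z$, and by (vii) it is $U$-finite. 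Conclusion (i) of the theorem is the first canonical isomorphism above, and conclusion (iii) is precisely condition (viii). For conclusion (ii), use (vi) that $i_0=\tau\circ\Delta$:
\[
i_0^*\mathcal G_t\simeq \Delta^*\tau^*\mathcal G_t\simeq \Delta^*p_X^*\mathcal G=(p_X\circ\Delta)^*\mathcal G=\mathrm{can}^*\mathcal G,
\]
the last equality by (v).

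The main obstacle is the first step: producing a single $Z$ that simultaneously misses all generic points of the closed fibre and arises as the complement of an open over which $\mathcal G$ is globally trivial. This relies on Grothendieck--Serre for semilocal regular one-dimensional rings, which reduces to Nisnevich's theorem on DVRs plus a patching argument. With that input secured, everything else is routine Nisnevich descent driven by the geometry supplied by Theorem~\ref{Moving_for_DVR}.
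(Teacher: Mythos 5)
Your proof follows essentially the same route as the paper: locate a closed $Z$ avoiding the generic points of the closed fibre with $\mathcal G|_{X-Z}$ trivial, feed $Z$ into Theorem \ref{Moving_for_DVR}, glue the trivial bundle on $\mathbb A^1_U-\tau(\mathcal Z)$ with $p_X^*\mathcal G$ on $\mathcal X$ across the Nisnevich square, and read off (i)--(iii) from properties (v), (vi), (vii), (viii) of the diagram. The Nisnevich-descent and verification steps match the paper word for word.

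The one place where you and the paper diverge is in producing $Z$. The paper writes that ``the main theorem of \cite{Guo22} yields the triviality of $\mathcal G|_U$,'' but $U=\mathrm{Spec}\,\mathcal O_{X,\bf x}$ has dimension $r+1$ and is not a Dedekind ring, so the cited theorem does not apply there as literally stated; the correct invocation is over the semilocalization at the generic points $\xi_1,\dots,\xi_m$ of the closed fibre, which is a semilocal Dedekind domain. You identify this ring $A=\mathcal O_{X,\xi_1,\dots,\xi_m}$ explicitly and spread out, which is the right fix. However, your phrasing ``Nisnevich's theorem combined with a gluing of the local trivializations at the $\xi_i$ implies $\mathcal G|_{\mathrm{Spec}\,A}$ is trivial'' understates what is needed: passing from triviality over each $A_{\xi_i}$ to triviality over the semilocal $A$ is not a formal patching exercise but is precisely the content of Guo's semilocal Dedekind theorem \cite{Guo22}. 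You should cite that result directly (as the paper does) rather than suggest it follows from Nisnevich's DVR case by an elementary gluing. With that substitution your argument is complete and correct.
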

\begin{proof}[Proof of Theorem \ref{MainHomotopy}]
The principal ${\bf G}$-bundle $\mathcal G$ is 
trivial over the generic point $\eta$ of $X$. 
The main theorem of \cite{Guo22} yields the triviality of the principal ${\bf G}$-bundle $\mathcal G|_U$.
In turn, this shows that there is a closed subset
$Z\subset X$ avoiding all generic
points of the closed fibre of $X$ 
and such that
the principal ${\bf G}$-bundle $\mathcal G|_{X-Z}$ is trivial.
Now consider the diagram \eqref{diag:A1_homotopy} as in
Theorem \ref{Moving_for_DVR} 
and take the closed subscheme 
${\mathcal Z}'=\tau(\mathcal Z)$
of the scheme $\mathbb A^1_U$. 

Clearly, $\mathcal Z'$ is finite over $U$ and it enjoys the property (iii).
To construct a desired principal ${\bf G}$-bundle $\mathcal G_t$ over $\mathbb A^1_U$
note that the bundle $p^*_X({\mathcal E})|_{\mathcal X- \mathcal Z}$ is trivial. Since the left square is a Nisnevich square in $\mathbf{Sm}_U$, there is a principal ${\bf G}$-bundle $\mathcal G_t$ over $\mathbb A^1_U$
such that 
\begin{itemize}
\item[1)]$\tau^*(\mathcal G_t)=p^*_X({\mathcal E})$;
\item[2)] $\mathcal G_t|_{\mathbb A^1_U-\mathcal Z'}$.
\end{itemize}    
Now the properties (v) and (vi) of the diagram \eqref{diag:A1_homotopy}
show that 
the principal ${\bf G}$-bundle $\mathcal G_t$ 
enjoys the property (ii), that is, the restriction of $\mathcal G_t$ to $\{0\}\times U$ coincides
with the restriction of $\mathcal G$ to $U$.

The proof of the theorem is completed.
\end{proof}

\subsection*{Acknowledgements} 
The authors thank the excellent environment of the International Mathematical Center at POMI.
This work is supported by Ministry of Science and Higher Education of the Russian Federation, agreement \textnumero~ 075-15-2022-289. 

\section{Weak elementary fibration: Definition and Result}\label{s:w_el_fibr_inf field}
In this Section, we recall the notion of a weak elementary fibration introduced in \cite[Defn.~1.1]{GPan2}
and formulate the main results of \cite[Thms.~1.3 and 1.4]{GPan2} as Theorems 
\ref{Very_General} and \ref{Very_General_Diagram} below.

The notion of a weak elementary fibration is a modification of the one introduced by Artin in~\cite[Exp.~XI, D\'ef.~3.1]{Art73}
and of its version introduced in \cite[Defn.~2.1]{PSV15}.
The result \cite[]{GPan2} is a modification of a result due to M. Artin~\cite{Art73} concerning existence of nice neighborhoods.
 
In this paper, the term {\bf scheme} means a separated, quasi-compact, Noetherian scheme of finite Krull dimension 
(as in \cite{MV99}).

\begin{defn}
\label{DefnElemFib}
A morphism $q: T\to S$ of schemes is called
a \emph{weak elementary fibration} if this morphism can be included in a commutative diagram
\begin{equation}
\label{SquareDiagram}
    \xymatrix{
     T\ar[drr]_{q}\ar[rr]^{j}&&
\bar T\ar[d]_{\bar q}  &&T_{\infty}\ar[ll]_{i}\ar[lld]^{q_{\infty}} &\\
     && S  &\\    }
\end{equation}
satisfying the following conditions:
\begin{itemize}
\item[{\rm(i)}] $j$ is an open embedding dense at each fibre of $\bar q$; $j: T\to \bar T-T_{\infty}$ is an isomorphism;
\item[{\rm(ii)}]
$\bar q$ is smooth projective all of whose fibres are equi-dimensional of dimension one;
\item[{\rm(iii)}] $q_{\infty}$ is a finite \'{e}tale surjective morphism;
\item[{\rm(iv)}] $q=\mathrm{pr}_S \circ \pi$, where $\pi: T\to \mathbb A^1_S$ is a finite flat surjective morphism.
\end{itemize}
The diagram \eqref{SquareDiagram} is called \emph{a diagram of the weak elementary fibration} $q: T\to S$.

Let $Z\subset T$ be a closed subscheme.
A scheme morphism $q: T\to S$ is called
a \emph{weak elementary $Z$-fibration}
if it is a weak elementary fibration and the morphism
$q|_Z: Z\to S$ is finite; particularly, $Z\cap T_{\infty}=\emptyset$.
In this case the diagram \eqref{SquareDiagram}
is called \emph{a diagram of the weak elementary $Z$-fibration} $q: T\to S$.
\end{defn}

\begin{notation}
Let $B$ be a local regular scheme and $b\in B$ its closed point.
For a $B$-scheme $Y$ put $Y_b=Y\times_B b$. For a $B$-morphism
$\phi: Y'\to Y''$ of $B$-schemes write $\phi_b$ for the $b$-morphism
$\phi\times_B b: Y'\times_B b\to Y''\times_B b$.
\end{notation}

\begin{thm}(\cite[Thm.~1.3]{GPan2})\label{Very_General}
Let $B$ be a local regular scheme and $b\in B$ its closed point.
Let $X/B$ be an irreducible $B$-smooth, $B$-projetive scheme of relative dimension $r$.
Let $\mathbf{x} \subset X$ be a finite subset of closed points. 
Then there are an open subscheme $S$ in $\mathbb A^{r-1}_B$,
an open subscheme $X_S\subset X$ containing $\mathbf{x}$, and a weak elementary fibration
$q_S: X_S\to S$.

Let $Z\subset X$ be a closed subscheme such that $\dim Z_b <r$. Then there is an open subscheme $S$ in $\mathbb A^{r-1}_B$,
an open subscheme $X_S\subset X$ containing $\mathbf{x}$ and a weak elementary $Z_S$-fibration
$q_S: X_S\to S$, where $Z_S:=Z\cap X_S$.
\end{thm}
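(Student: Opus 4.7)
My plan follows Artin's construction of good neighborhoods (SGA~4, Exp.~XI) and its adaptations to the relative setting in \cite{PSV15} and \cite{Pan20a}. I would obtain the weak elementary fibration from a linear projection applied to a projective embedding $X \hookrightarrow \mathbb{P}^N_B$ coming from a very ample line bundle on $X/B$.

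The construction turns on finding nested linear subspaces $L_0 \subset L_1 \subset \mathbb{P}^N_B$ of dimensions $N-r-1$ and $N-r$ respectively, together with a hyperplane $H$ containing $L_1$, such that: (a) $L_0 \cap X = \emptyset$, so projection from $L_0$ realizes a finite morphism $\pi : X \to \mathbb{P}^r_B$; (b) $Y := X \cap H$ is $B$-smooth and disjoint from $\mathbf{x}$; (c) the further projection from the point $L_1/L_0 \in \mathbb{P}^r_B$ to $\mathbb{P}^{r-1}_B$, restricted to $Y$, is \'etale near the images of $\mathbf{x}$; (d) the composite $X \to \mathbb{P}^{r-1}_B$ is smooth of relative dimension one at every $x \in \mathbf{x}$ and injective on $\mathbf{x}$. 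Granting this data, I choose a further hyperplane $H_\infty \subset \mathbb{P}^{r-1}_B$ avoiding the images of $\mathbf{x}$, set $\mathbb{A}^{r-1}_B := \mathbb{P}^{r-1}_B \setminus H_\infty$, pick $S \subset \mathbb{A}^{r-1}_B$ a sufficiently small open containing the images of $\mathbf{x}$, and put $\bar X_S := X \times_{\mathbb{P}^{r-1}_B} S$, $T_\infty := Y \times_{\mathbb{P}^{r-1}_B} S$, $X_S := \bar X_S \setminus T_\infty$. Then $\bar X_S \to S$ is smooth projective of relative dimension one, $T_\infty \to S$ is finite \'etale, and $X_S \to \mathbb{A}^1_S$ is finite (obtained from the finite map $X \setminus Y \to \mathbb{P}^r_B \setminus H$ by base change along $S \to \mathbb{P}^{r-1}_B$), which recovers the diagram in Definition \ref{DefnElemFib}.

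The main obstacle is the existence of $(L_0, L_1, H)$ satisfying (a)--(d) globally over $B$. Over an infinite base field these conditions hold generically, so a standard Bertini-type argument suffices; however, the base $B$ may be a DVR with finite residue field $k(b)$, precluding the usual generic-position reasoning. The standard remedy --- which I expect is the technical heart of \cite[Thm.~1.3]{GPan2} --- is to parametrize triples $(L_0, L_1, H)$ by a $B$-smooth scheme $\mathcal{P}$, identify the locus $\mathcal{B} \subset \mathcal{P}$ where some condition in (a)--(d) fails as a closed subscheme whose closed fibre $\mathcal{B}_b \subset \mathcal{P}_b$ has codimension at least one (exploiting both $B$-projectivity of $X$ and $B$-smoothness along $\mathbf{x}$), then invoke a Bertini-type theorem over finite fields (in the style of Gabber or Poonen, or via the inductive rescaling arguments of \cite{Pan20a}) to find a $k(b)$-point of $\mathcal{P}_b \setminus \mathcal{B}_b$, and finally lift it to a $B$-point using smoothness of $\mathcal{P}$.

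For the $Z$-refinement, the dimension bound $\dim Z_b < r$ permits the additional requirement (e): $\pi|_Z : Z \to \mathbb{P}^{r-1}_B$ is quasi-finite near the images of $\mathbf{x}$ --- generically true because an $(r-1)$-dimensional target captures as quasi-finite any subscheme whose closed-fibre dimension is below $r$. The associated failure locus is again a proper closed subscheme of $\mathcal{P}$ of positive codimension on the closed fibre, so the same mechanism produces $(L_0, L_1, H)$ satisfying (a)--(e). Shrinking $S$ further to a locus over which $Z \cap X_S \to S$ is in fact finite (not merely quasi-finite) then yields the weak elementary $Z_S$-fibration.
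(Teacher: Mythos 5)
The statement you were asked to prove is not actually proved in this paper: Theorem~\ref{Very_General} is imported verbatim from the preprint \cite{GPan2} (``Weak elementary fibrations''), and the present article cites it without reproducing an argument. So there is no in-paper proof against which to compare your outline, and I can only assess the proposal on its own terms.

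Your plan is the standard Artin construction (SGA~4, Exp.~XI) adapted to a regular local base, which is indeed the framework that \cite{PSV15}, \cite{Pan20a}, and \cite{GPan2} operate in, so the overall shape is right. There are, however, two places where the sketch is vaguer than the difficulty warrants. First, when you invoke ``a Bertini-type theorem over finite fields (in the style of Gabber or Poonen)'' to produce a $k(b)$-point of $\mathcal{P}_b\setminus\mathcal{B}_b$, note that the theorems of \cite{Poo} and \cite{ChPoo} produce good \emph{hypersurfaces of sufficiently high degree}, not good linear centers: over a finite residue field, a codimension-one bad locus in the Grassmannian parametrizing $L_0$ can perfectly well contain every $k(b)$-rational point. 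The standard remedy (which you only gesture at by mentioning ``inductive rescaling'' from \cite{Pan20a}) is to precompose with a $d$-uple Veronese re-embedding $X\hookrightarrow\mathbb{P}^{N'}_B$ for $d\gg 0$, so that linear projections in the new coordinates correspond to degree-$d$ projections in the original ones, putting you in the setting where the hypersurface Bertini estimates actually apply; this Veronese step should appear explicitly rather than being absorbed into a parenthetical. Second, condition (c) as written (``restricted to $Y$, is \'etale near the images of $\mathbf{x}$'') is not quite the condition you need, since $Y$ is disjoint from $\mathbf{x}$ by (b); what is actually required is that $Y\to\mathbb{P}^{r-1}_B$ be finite \'etale over a neighborhood of the images of $\mathbf{x}$ under the composite projection, i.e.\ over the base locus of the family, not near the points of $\mathbf{x}$ themselves. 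With those two clarifications your outline is a plausible reconstruction of what a proof of \cite[Thm.~1.3]{GPan2} should look like, but since that proof is not available in the present source I cannot certify that it matches.
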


\begin{thm}(\cite[Thm.~1.4]{GPan2})\label{Very_General_Diagram}
Under the hypotheses of Theorem \ref{Very_General}, there is an open
$S\subset \mathbb A^{r}_B$, an open neighborhood $\dot X_S$ of the set ${\bf x}$ in $X$ and
a weak elementary fibration $\dot q_S: \dot X_S\to S$ and a commutative diagram of $S$-schemes
\begin{equation}
\label{V_G_SquareDiagram2}
    \xymatrix{
     \dot X_S\ar[drr]_{\dot q_S}\ar[rr]^{j_S}&&
\hat X_S\ar[d]_{\hat q_S}&& \mathbb W_S\ar[ll]_{i}\ar[lld]^{\mathrm{pr}_S} &\\
     && S  &\\    }
\end{equation}
which is a diagram of the weak elementary fibration $\dot q_S: \dot X_S\to S$.

Moreover, if $Z\subset X$ is as in Theorem \ref{Very_General}, then
the open $S\subset \mathbb A^{r}_B$, the open neighborhood $\dot X_S$ of the set ${\bf x}$ in $X$,
the weak elementary fibration $\dot q_S: \dot X_S\to S$ and
the commutative diagram \eqref{V_G_SquareDiagram2} of $S$-schemes
can be chosen such that the \eqref{V_G_SquareDiagram2}
is a diagram of the weak elementary $\dot Z_S$-fibration
$\dot q_S: \dot X_S\to S$.
\end{thm}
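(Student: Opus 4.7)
The plan is to reduce Theorem \ref{Very_General_Diagram} to Theorem \ref{Very_General} by unpacking Definition \ref{DefnElemFib}. The crucial observation is that the diagram \eqref{V_G_SquareDiagram2} has exactly the same shape as the diagram \eqref{SquareDiagram} in the definition of a weak elementary fibration; consequently, once such a fibration is produced by Theorem \ref{Very_General}, its defining diagram supplies \eqref{V_G_SquareDiagram2} essentially for free.

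Concretely, I would first apply Theorem \ref{Very_General} to the pair $(X,{\bf x})$ to obtain an open $S_0\subset \mathbb A^{r-1}_B$, an open neighborhood $X_{S_0}\subset X$ of ${\bf x}$, and a weak elementary fibration $q_{S_0}:X_{S_0}\to S_0$. By Definition \ref{DefnElemFib}, this fibration comes equipped with a smooth projective morphism $\bar q:\bar X_{S_0}\to S_0$ of equi-dimensional one-dimensional fibres, a finite \'etale surjection $q_\infty:T_\infty\to S_0$, an open dense embedding $j:X_{S_0}\hookrightarrow \bar X_{S_0}$ identifying $X_{S_0}$ with $\bar X_{S_0}-T_\infty$, and a factorization $q_{S_0}=\mathrm{pr}_{S_0}\circ \pi$ through a finite flat surjection $\pi:X_{S_0}\to \mathbb A^1_{S_0}$. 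Setting $\dot X_S:=X_{S_0}$, $\dot q_S:=q_{S_0}$, $\hat X_S:=\bar X_{S_0}$, $\hat q_S:=\bar q$, $\mathbb W_S:=T_\infty$, and $j_S:=j$, and regarding $S:=S_0$ as an open in $\mathbb A^r_B$ either through a coordinate hyperplane inclusion $\mathbb A^{r-1}_B\hookrightarrow \mathbb A^r_B$ or via the extra $\mathbb A^1$-coordinate furnished by $\pi$, the diagram \eqref{V_G_SquareDiagram2} emerges as precisely the defining diagram of $q_{S_0}$.

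For the $Z$-variant, I would apply the second assertion of Theorem \ref{Very_General} to $(X,{\bf x},Z)$, which yields a weak elementary $Z_S$-fibration. In particular, $Z_S:=Z\cap X_{S_0}$ maps finitely to $S_0$ and $Z_S\cap T_\infty=\emptyset$, so the same renaming furnishes a diagram of a weak elementary $\dot Z_S$-fibration with $\dot Z_S=Z\cap \dot X_S$.

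The real content sits entirely in Theorem \ref{Very_General}; any difficulty in promoting it to the diagrammatic form of Theorem \ref{Very_General_Diagram} is merely bookkeeping. The hard part of Theorem \ref{Very_General} itself, over a mixed-characteristic base $B$, is to choose a projective embedding of $X$ and a generic projection that is finite and \'etale near the closed points ${\bf x}$ (and near the generic points of the closed-fibre part of $Z$), together with a Bertini-type argument ensuring that the divisor at infinity is finite \'etale over the base; given that input, the present theorem is just a formal repackaging.
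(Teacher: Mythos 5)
The paper does not prove Theorem \ref{Very_General_Diagram}: it explicitly states in Section~\ref{s:w_el_fibr_inf field} that Theorems \ref{Very_General} and \ref{Very_General_Diagram} are imported verbatim from the preprint \cite{GPan2} (as Theorems~1.3 and~1.4 there), so there is no in-paper proof against which to compare your argument. Your reduction therefore has to be judged on its own.

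The broad strategy---apply Theorem \ref{Very_General}, then unfold Definition \ref{DefnElemFib} to exhibit a diagram of the form \eqref{SquareDiagram}---is a plausible first move, and it is true that such a diagram always exists once a weak elementary ($Z$-)fibration has been produced. However, there are two concrete problems. First, the passage from $S_0\subset\mathbb A^{r-1}_B$ to $S\subset\mathbb A^{r}_B$ is not bookkeeping, and both of your proposed workarounds fail as stated: a coordinate hyperplane $\mathbb A^{r-1}_B\hookrightarrow\mathbb A^{r}_B$ is a \emph{closed} embedding, so it does not realize a nonempty open of $\mathbb A^{r-1}_B$ as an open of $\mathbb A^{r}_B$; and using "the extra $\mathbb A^1$-coordinate from $\pi$" to enlarge the base would change the target of the fibration (and hence the fibration itself), not merely rename it. Incidentally, a dimension count forces $\dim_B S=r-1$ for any weak elementary fibration from an open of $X$, so either the exponent in the printed statement is a slip and your first move is then essentially fine, or the $S$ in Theorem \ref{Very_General_Diagram} is constructed differently in \cite{GPan2} and one cannot simply reuse the $S_0$ from Theorem \ref{Very_General}; you should flag this rather than offer an incorrect fix. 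Second, the notation $\hat X_S$, $\mathbb W_S$, $\mathrm{pr}_S$ in \eqref{V_G_SquareDiagram2} is carried over from \cite{GPan2} and is suggestive of additional structure (for instance that $\mathbb W_S\to S$ is literally a coordinate projection, or that $\hat X_S$ is a specific compactification), which would make Theorem \ref{Very_General_Diagram} strictly stronger than "some diagram as in Definition \ref{DefnElemFib} exists." Asserting that "the real content sits entirely in Theorem \ref{Very_General}" presumes this extra structure is vacuous, which is exactly what needs to be justified and cannot be read off from the present paper.
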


\section{Overcoming finite field obstructions}
\textbf{Panin's Reembedding closed subschemes into affine lines.}
For a field $k$ and a smooth connected $k$-curve $C$, every closed point $x\in C$ can be reembeded into $\mathbb{A}^1_k$ in the sense that the there is a closed point $y\in \mathbb{A}^1_k$ whose residue field $k_y$ is isomorphic to $k_x$. 
Indeed,  $x$ has an open neighborhood $U$ \'etale over $\mathbb{A}^1_k$, such that $k_x/k$ splits into a separable algebraic extension $k_x/k'$ and a simple extension $k'/k$, then primitive element theorem implies that $k_x/k$ is simple.
However, when $k$ is a finite field, say $k=\mathbb{F}_2$, although $\mathbb{A}^1_{\mathbb{F}_2}$ has infinitely many closed points, its closed points of a fixed degree is finite.
For instance, there is no closed embedding from the disjoint union of three copies of $\mathrm{Spec}\, \mathbb{F}_2$ into $\mathbb{A}^1_{\mathbb{F}_2}$, which only has two closed points of degree $1$.
We formulate this phenomena into the following `finite field obstructions'.
This phenomena was discovered by Panin.
He developed a method to overcome finite field obstructions already in \cite{Pan15}, see also \cite{Pan20a}*{\S5--\S6}. 
Much later, some higer dimensional generalizations were done in \cite{Ces22b}*{\S3}.
\vskip 0.1cm
For a finite field $k$ and a finite $k$-scheme $Z_k$, we say that there is no \emph{finite field obstruction} for embedding $Z$ into $\mathbb{A}_k^1$, if for every positive integer $d$, the following inequality holds
\[
\#\{z\in Z_k \,|\, [k(z)\colon k]=d\}\leq \#\{x\in  \mathbb{A}^1_k \,|\, [k(x)\colon k]=d\}.
\]
\begin{lem}\label{extend-points-local}
For a semilocal affine scheme $U$, an irreducible semilocal affine $U$-scheme $S$ whose each closed $U$-fiber is semilocal, a $U$-section $\delta\colon U\to S$ such that $T:= \delta(U)\subset S$ is closed, there exist a finite \'etale morphism $\rho\colon S'\to S$ from a $U$-scheme $S'$ and a lift $\delta'\colon T\to S'$ of $\delta$ such that
\begin{itemize}
\item[(i)] for every closed point $u\in U$, the point $\delta'(\delta^{-1}(u))\in S'_u$ is the unique $k(u)$-rational point of $S'_u$;
\item[(ii)] for every closed point $u\in U$, there is no finite field obstruction for embedding $S'_u$ into $\mathbb{A}_u^1$, that is
\[
\text{$\#\{z\in S'_u\,|\, [k(z)\colon k(u)]=d\}\leq \#\{x\in  \mathbb{A}^1_u \,|\, [k(z)\colon k(u)]=d\}$ for each integer $d\geq 1$}.
\]
\end{itemize}
When $S$ is regular, we may construct $S'$ such that it is irreducible.
\end{lem}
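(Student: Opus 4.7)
The plan is to realize $\rho\colon S'\to S$ as $\mathrm{Spec}(A[t]/(f))$ for a carefully chosen monic polynomial $f\in A[t]$ of some large degree $n$, where $S=\mathrm{Spec}(A)$, $T=V(I)$, and $a\in A$ is a lift of the $A/I$-element encoding the section $\delta$. Concretely, $f$ is to be constructed so that: (1) the reduction $\bar f_{\mathfrak m}\in k(\mathfrak m)[t]$ at every maximal ideal $\mathfrak m\subset A$ is squarefree (making $A\to A[t]/(f)$ finite \'etale); (2) $f(a)\in I$ and $f'(a)\bmod I\in(A/I)^\times$ (so $t\mapsto a\bmod I$ provides the required clopen section $\delta'\colon T\to S'$ lifting $\delta$); and (3) the irreducible factorizations of the $\bar f_{\mathfrak m}$ are prescribed fiberwise to force (i) and (ii).

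For each closed $u\in U$, let $t_u=\delta(u),z_{u,1},\ldots,z_{u,m_u}$ denote the closed points of $S_u$ with degrees $d_{u,i}=[k(z_{u,i}):k(u)]$. The closed points of $S'_u$ above $z_{u,i}$ are in bijection with the irreducible factors of $\bar f_{\mathfrak m_{z_{u,i}}}$, each contributing a point of $S'_u$ of degree $d_{u,i}\cdot(\text{factor degree})$ over $k(u)$. I would prescribe at $z=t_u$ a unique linear factor $t-\bar a_u$ with $\bar a_u:=a\bmod\mathfrak m_{t_u}$ and all other irreducible factors of degree $\geq 2$; at $z=z_{u,i}$ with $d_{u,i}=1$ and $i\geq 1$, all factors of degree $\geq 2$; elsewhere any squarefree factorization. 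Then (i) is immediate by inspection. For (ii), since $\#\{x\in\mathbb A^1_{k(u)}:[k(x):k(u)]=d\}$ grows like $|k(u)|^d/d$ while the degree-$d$ count on the $S'_u$-side is a bounded combinatorial function of the chosen partitions, for $n$ large enough one can spread the parts into sufficiently high, distinct degrees so that (ii) holds uniformly in $u$ and $d$. To assemble a global $f$, observe that $I\subset\mathfrak m_{t_u}$ while $I+\mathfrak m_{z_{u,i}}=A$ for $i\geq 1$, so the Chinese remainder theorem splits the data into a monic $\bar f\in(A/I)[t]$ of degree $n$ with $\bar f(\bar a)=0$ and $\bar f'(\bar a)\in(A/I)^\times$ (compatibly extending the $\bar f_{t_u}$ via $A/I\twoheadrightarrow k(u)$), together with independently chosen monic squarefree $\bar f_{z_{u,i}}\in k(z_{u,i})[t]$ for $i\geq 1$; a compatible monic lift produces $f\in A[t]$ of degree $n$ realizing (1)--(3).

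For the last clause, assume $S$ is regular, so $A$ is a regular semilocal domain, hence a normal integral domain; by Gauss's lemma a monic $f\in A[t]$ is irreducible over $A$ iff it is irreducible over $K:=\mathrm{Frac}(A)$, which is equivalent to $A[t]/(f)$ being a domain, i.e.\ $S'$ being irreducible. If some closed point $z_\*\in S$ lies outside $T$, then take the partition at $z_\*$ to be the trivial one $\{n\}$, so that $\bar f_{z_\*}$ is irreducible of degree $n$; any monic factorization of $f$ would reduce to one of $\bar f_{z_\*}$, forcing $f$ itself to be irreducible. In the degenerate case where every closed point of $S$ is in $T$, I would perturb $f$ within $f+I\cdot A[t]_{<n}$, which preserves all specifications and (i)--(ii) since $I\subset\mathfrak m_{t_u}$ for every $u$, and invoke a Hilbert-irreducibility-style argument (using that the generic monic polynomial of degree $n$ over $K$ is irreducible and that the coset is Zariski dense in the affine space of such polynomials over $K$) to pick an irreducible representative. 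The main obstacle throughout is the uniform realization of (ii) when $|k(u)|$ is as small as $2$: this is exactly the finite field obstruction this section is devoted to defeating, and it is what forces $n$ to grow with the complexity of the input.
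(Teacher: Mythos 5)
Your construction follows essentially the same strategy as the paper's: present $S' = \mathrm{Spec}\bigl(A[t]/(f)\bigr)$ for a monic $f$ whose reductions at the finitely many closed points of the semilocal $S$ are prescribed squarefree polynomials, assembled by CRT/Nakayama, with a linear factor (with the constant-term/derivative condition) at the $T$-points supplying the clopen section $\delta'$, and with high-degree factors elsewhere so that (i)--(ii) hold once the degree $q$ is large. Where you genuinely diverge is the irreducibility clause, and there your route has gaps. The paper's argument is a one-liner: $\rho\colon S'\to S$ is finite \'etale and $S$ is regular, so $S'$ is regular, hence a disjoint union of integral schemes; one simply replaces $S'$ by the irreducible component containing the connected set $\delta'(T)$, which keeps the finite \'etale map and keeps (i)--(ii) since discarding points can only help. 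You instead try to force $f$ itself to be irreducible over $A$. In your ``main case'' you prescribe $\bar f_{z_*}$ irreducible of degree $n$ at some closed $z_*\notin T$; but such a polynomial need not exist if $k(z_*)$ is infinite (for instance algebraically closed), so you would additionally have to argue that some point outside $T$ has a finite residue field, which you do not. In your ``fallback case'' the Hilbert-irreducibility-style density argument is not justified: Zariski density of the coset $f + I\cdot A[t]_{<n}$ in the parameter space over $K=\mathrm{Frac}(A)$ does not produce an irreducible member unless $K$ is Hilbertian (or unless you exhibit a nonempty open locus of irreducibles meeting that coset), and neither is established. The regularity-plus-component trick is strictly cleaner and sidesteps both of these problems entirely.
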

\begin{proof}
Let $\mathbf{t}$ (resp., $\mathbf{t}'$) be the finite set of closed points of $T$ whose residue fields are infinite (resp., finite) and let $\mathbf{s}$ (resp., $\mathbf{s}'$) be the finite set of closed points $S\backslash T$ whose residue fields are infinite (resp., finite). 
For any integer $q>1$, each $s_i\in \mathbf{s}$ and each $s_i' \in \mathbf{s}'$ we choose monic polynomials 
\begin{itemize}
\item $f_{s_i}\in k(s_i)[x]$ as the product of $q$ distinct linear polynomials;
\item $f_{s_i'}\in k(s_i')[x]$ irreducible of degree $q$.
\end{itemize}
Besides, we choose a monic polynomial $f_{\mathbf{t}}\in xR[x]$ of degree $q$ such that 
\begin{itemize}
\item the reduction of $f_{\mathbf{t}}$ in every $k(t_i)[x]$ has degree $q$ as a product of distinct linear polynomials;
\item the reduction of $f_{\mathbf{t}}$ in every $k(t_i')[x]$ is a product of $x$ and a different irreducible polynomial of degree $q-1$.
\end{itemize}
By Nakayama's lemma, there exists a polynomial $F(x)\in \Gamma(S,\mathcal{O}_S)[x]$ of degree $q$ such that the reductions of $F$ become $f_{\mathbf{t}}$ on the section $T$,  become $f_{s_i}$ on $s_i$ and become $f_{s_i'}$ on $s_i'$. 
The estimation above for the number of points of degree $q$ implies that when $q$ is large enough the displayed inequality is satisfied. 
Moreover, the polynomial $F(x)$ determines a scheme $S':= \mathrm{Spec} (\Gamma(S,\mathcal{O}_S)[x]/F(x))$ that is finite \'etale over $S$.
Let $\rho\colon S'\to S$ be the resulting morphism, hence the preimage  $\rho^{-1}(T)=T\sqcup T'$ by construction of $F(x)$.
It suffices to define $\delta'$ as the composite $T\hookrightarrow \rho^{-1}(T)\hookrightarrow S'$, which is a lift of $\delta$.
If $S$ is regular, then so is $S'$.
We may hence replace $S'$ by an irreducible component of $S'$ containing $\delta'(T)$. 
\end{proof}

\begin{lem}\label{overcome}
For a semilocal affine scheme $U$, an affine morphism $p_U\colon \mathcal{X}\to U$ of schemes with a section $\Delta\colon U\to {\mathcal{X}}$, and a $U$-finite closed subscheme $\mathcal{Z}\subset {\mathcal{X}}$, there exists a finite $U$-morphism $\phi\colon \tilde{\mathcal{X}}\to \mathcal{X}$ with a $U$-finite closed subscheme $\tilde{\mathcal{Z}}:= \phi^{-1}(\mathcal{Z}\cup \Delta(U))$ such that 
\begin{itemize}
\item[(i)] $\Delta$ lifts to a section $\Delta'\colon U\to \tilde{\mathcal{X}}$;
\item[(ii)] $\phi$ is \'etale around $\tilde{\mathcal{Z}}$;
\item[(iii)] for every closed point $u\in U$ and every integer $n\geq 1$, we have
\[
\#\{z\in \tilde{\mathcal{Z}}_{k(u)}\,|\, [k(z)\colon k(u)]=n\} \leq  \#\{z\in \mathbb{A}^1_{k(u)}\,|\, [k(z)\colon k(u)]=n\}.
\]
\end{itemize}
\end{lem}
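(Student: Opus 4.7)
The strategy is to mimic the construction in Lemma \ref{extend-points-local}: build $\tilde{\mathcal{X}}$ as $\mathrm{Spec}(A[x]/(F(x)))$ for $A := \Gamma(\mathcal{X}, \mathcal{O}_{\mathcal{X}})$ and a monic polynomial $F(x) \in A[x]$ of sufficiently large degree $q$. One designs $F$ so that its reductions at the closed points of $W := \mathcal{Z} \cup \Delta(U)$ have prescribed factorizations: at each closed point $\Delta(u) \in \Delta(U)$ the reduction equals $x \cdot g_u(x)$ with $g_u \in k(u)[x]$ monic irreducible of degree $q-1$ and $g_u(0) \neq 0$, while at each closed $w \in W \setminus \Delta(U)$ the reduction $f_w$ is monic irreducible of degree $q$ over $k(w)$. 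One additionally arranges the global condition $F(0) \in I_\Delta$, where $I_\Delta \subset A$ is the ideal of $\Delta(U)$, so that $\Delta$ lifts to a section $\Delta' \colon U \to \tilde{\mathcal{X}}$ along $x = 0$.

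Since $W$ is $U$-finite and hence affine semilocal, the coefficients of $F$ are constructed by the Chinese Remainder Theorem plus Nakayama lifting: one first specifies them in $\bar A := \Gamma(W, \mathcal{O}_W)$ so that their residues at the finitely many closed points of $W$ match the prescribed coefficients of the $\{f_w\}$, and then lifts to $A$. The key compatibility point is that $\bar I_\Delta \subset \bar A$ (the ideal of $\Delta(U)$ inside $W$) vanishes at closed points of $\Delta(U)$ and surjects onto $\prod_{w \notin \Delta(U)} k(w)$, so one can select $\bar a_0 \in \bar I_\Delta$ whose residues agree with the desired constant terms of the $f_w$ (automatically zero on $\Delta(U)$); similarly, the condition $g_u(0) \neq 0$ guarantees that the image of $a_1$ in $\mathcal{O}_U = A/I_\Delta$ is a unit. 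One then lifts $\bar a_0$ to $a_0 \in I_\Delta \subset A$, using that $A \twoheadrightarrow \bar A$ sends $I_\Delta$ onto $\bar I_\Delta$, and lifts the remaining $\bar a_j$ to $a_j \in A$ arbitrarily.

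Properties (i)--(iii) are verified as follows. For (i), from $a_0 \in I_\Delta$ and the fact that $a_1 \pmod{I_\Delta}$ is a unit in $\mathcal{O}_U$ one has $\Delta^*F(x) = x \cdot H(x)$ in $\mathcal{O}_U[x]$ with $H(0) \in \mathcal{O}_U^\times$, so $x = 0$ determines a section $\Delta'$ lifting $\Delta$. For (ii), at each closed point of $\tilde{\mathcal{Z}}$ lying over $w \in W$ the polynomial $f_w$ is separable by construction, so $F' \neq 0$ there, giving \'etaleness of $\phi$; since the \'etale locus is open and $\tilde{\mathcal{Z}}$ is $U$-finite (hence semilocal), this locus is an open neighborhood of $\tilde{\mathcal{Z}}$. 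For (iii), the fiber $\tilde{\mathcal{Z}}_u$ over a closed $u \in U$ consists of one $k(u)$-rational point (from $x = 0$ over $\Delta(u)$), one point of degree $q-1$ (from $g_u$), and, for each $w \in (\mathcal{Z} \setminus \Delta(U))_u$, a single point of degree $q \cdot [k(w):k(u)]$ over $k(u)$; since the total number of such $w$ is fixed by the problem data and $\mathbb{A}^1_{k(u)}$ has on the order of $|k(u)|^n/n$ closed points of degree $n$, choosing $q$ sufficiently large forces the desired inequality. The main subtlety is reconciling the global constraint $a_0 \in I_\Delta$ (needed to lift $\Delta$) with the pointwise prescriptions at closed points of $W$ (needed for the degree count), which is settled by the surjectivity of $\bar I_\Delta$ onto the product of residue fields at closed points of $W \setminus \Delta(U)$ noted above.
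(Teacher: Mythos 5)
Your proposed route differs structurally from the paper's: the paper first semilocalizes $\mathcal{X}$ at the relevant closed points (invoking a Gabber--Liu--Lorenzini avoidance argument), applies its Lemma~\ref{extend-points-local} to produce a finite \'etale cover of the resulting semilocal scheme, and then spreads out to $\mathcal{X}$ via a limit argument and Zariski's main theorem. You instead build $\tilde{\mathcal{X}} = \mathrm{Spec}(A[x]/(F))$ directly, via a globally defined monic polynomial $F\in A[x]$, which streamlines away the semilocalization and spread-out steps. The Nakayama/CRT lifting of coefficients through $A\twoheadrightarrow\bar{A}$, the lifting of $a_0$ inside $I_\Delta$ using $I_\Delta\twoheadrightarrow\bar{I}_\Delta$, and the verifications of (i) and (ii) along these lines are all sound.

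However, there is a real gap in (iii). You insist on \emph{irreducible} reductions $f_w$ of degree $q$ and $g_u$ of degree $q-1$ at every closed point of $W$. When $k(u)$ (or $k(w)$) is infinite, such irreducible polynomials may simply not exist --- for instance, if the DVR $R$ has residue field $\mathbb{C}$, the closed points of $U$ have algebraically closed residue fields and there is no irreducible polynomial of degree $q-1>1$ at all. Moreover, even setting aside this existence issue, the inequality in (iii) is then violated: a point of $\tilde{\mathcal{Z}}_u$ of degree $q-1>1$ cannot be matched in $\mathbb{A}^1_{k(u)}$, which has no closed points of that degree. The paper's Lemma~\ref{extend-points-local} gets around this by case-splitting on the residue field: at closed points with \emph{infinite} residue field it prescribes a product of $q$ \emph{distinct linear} factors (so all resulting fiber points have degree $1$, where $\mathbb{A}^1_{k(u)}$ has infinitely many), reserving the irreducible-factor prescription for the \emph{finite} residue field points, where enlarging $q$ makes the counting inequality hold. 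Your construction needs the same dichotomy to go through; as written, the unconditional use of irreducible factors cannot be carried out, and the degree count breaks down over infinite residue fields.
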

\begin{proof}
Take the set $\mathbf{z}$ of all the closed points of $\mathcal{Z}\cup \Delta(U)$ in the closed $U$-fibers and let $S$ be the semilocalization of $\mathcal{X}$ at $\mathbf{z}$. 
The set $\mathbf{z}$ is finite because $U$ is semilocal.
Since $\mathcal{X}\to U$ is quasi-projective, when forming $S$, by \cite{GLL13}*{Thm.~5.1}, there exists a hypersurface $H\subset \mathbb{P}^N_U$ for an $N\geq 0$ such that $\mathcal{X}\cap H$ contains $\mathcal{Z}\cup \Delta(U)$ but excludes the closed points of $\mathcal{X}-(\mathcal{Z}\cup \Delta(U))$. Therefore, after semilocalization, one may view $\mathcal{Z}\cup \Delta(U)$ as a closed subscheme of $S$ and denote $T:= \Delta(U)$ as the section in $S$.  
Without generalities, we assume that $S$ is irreducible, so Lemma \ref{extend-points-local} applies, yielding a finite \'etale morphism $\tilde{S}\to S$. 
By limit argument and Zariski main theorem, $\tilde{S}$ can by replaced by a $U$-scheme $\tilde{\mathcal{X}}$ satisfying the desired conditions. 
\end{proof}

\section{Arranging a relative curve to an affine line}
The section above avoids the finite finite obstruction for embedding a closed subscheme $\mathcal{Z}\subset \mathcal{X}$ into $\mathbb{A}^1_U$, so one can extend this embedding into a morphism $\mathcal{X}\to \mathbb{A}^1_U$.
\begin{lem}\label{toaffineline}
For a semilocal domain $\mathfrak{R}$, an affine $\mathfrak{R}$-smooth scheme $C$ of pure relative dimension one with an $R$-finite closed subscheme $Z\subset C$.
If for every maximal ideal $\mathfrak{m}\subset \mathfrak{R}$, there is no finite field obstruction for embedding $Z_{\mathfrak{m}}$ to $\mathbb{A}^1_{\kappa_{\mathfrak{m}}}$,  
then there exist a quasi-finite flat surjective morphism $\tau\colon C\to \mathbb{A}_{\mathfrak{R}}^1$ such that $\tau|_{Z}\colon Z\hookrightarrow \mathbb{A}^1_{\mathfrak{R}}$ is a closed embedding and $\tau$ is \'etale on an open neighborhood of $Z$.
\end{lem}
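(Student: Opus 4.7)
The plan is to construct $\tau$ as the morphism to $\mathbb{A}^1_{\mathfrak{R}} = \mathrm{Spec}(\mathfrak{R}[t])$ associated with a single carefully chosen element $f \in A := \Gamma(C, \mathcal{O}_C)$. First, I would use the no-finite-field-obstruction hypothesis to fix target values of $f$ at the closed points of $Z$. Let $\mathbf{z}$ denote the (necessarily finite) set of closed points of $Z$. For each $z \in \mathbf{z}$ lying over a maximal ideal $\mathfrak{m} \subset \mathfrak{R}$, the hypothesis lets me pick distinct closed points $q_z \in \mathbb{A}^1_{\kappa_{\mathfrak{m}}}$ of the same residue-field degree over $\kappa_{\mathfrak{m}}$ as $z$; any $\kappa_{\mathfrak{m}}$-algebra isomorphism $\kappa(q_z) \cong \kappa(z)$ then transports the coordinate $t$ to an element $a_z \in \kappa(z)$, the value I want $f$ to take at $z$.

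Next, I would specify $f$ modulo the ideal $J := \bigcap_{z \in \mathbf{z}} \mathfrak{m}_z^2 \subset A$. By the Chinese Remainder Theorem, $A/J \cong \prod_z A/\mathfrak{m}_z^2$; in each factor I prescribe $f \equiv a_z \pmod{\mathfrak{m}_z}$ and require that $f - \tilde{a}_z$ project to a uniformizer of the local ring at $z$ on the closed fibre through $z$ (where $\tilde{a}_z$ is any lift of $a_z$ to $A$). Since $A \twoheadrightarrow A/J$ is surjective, this lifts to a global $f \in A$. To verify that $\tau|_Z \colon Z \to \mathbb{A}^1_{\mathfrak{R}}$ is a closed embedding, its defining ring map $\mathfrak{R}[t] \to A/I$ (with $I$ the ideal of $Z$) has image a finitely generated $\mathfrak{R}$-submodule of $A/I$, so Nakayama over the semilocal $\mathfrak{R}$ reduces surjectivity to each closed fibre $Z_{\mathfrak{m}}$; there the distinctness of the $q_z$ and the derivative condition deliver $\kappa_{\mathfrak{m}}[\bar{f}] = \Gamma(Z_{\mathfrak{m}}, \mathcal{O}_{Z_{\mathfrak{m}}})$. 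For \'{e}taleness near $Z$, the non-\'{e}tale locus is the closed set $V(df) \subset C$; by the derivative condition it meets no closed point of $Z$, and hence — since $Z$ is Noetherian — no point of $Z$ at all, whereupon openness of the \'{e}tale locus furnishes the required open neighborhood.

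The remaining conditions — quasi-finiteness, flatness, and surjectivity of $\tau$ — are the most delicate. Since perturbations of $f$ by elements of $J$ preserve all local data at $Z$, I retain latitude to further arrange that $f$ is non-constant on every irreducible component of every fibre $C_s$ of $C \to \mathrm{Spec}(\mathfrak{R})$; this makes each $\tau_s$ dominant between smooth $\kappa(s)$-curves, hence quasi-finite and flat, and the fibre-wise flatness criterion then promotes this to flatness of $\tau$. The principal obstacle is surjectivity: since $\tau$ is only quasi-finite, its image is open but not a priori closed. I would resolve this by beginning with a finite flat surjective auxiliary morphism $\tau_0 \colon C \to \mathbb{A}^1_{\mathfrak{R}}$ (obtainable by a relative Noether normalization for smooth affine $\mathfrak{R}$-curves, or supplied by an ambient weak elementary fibration structure on $C$) and carrying out the prescriptions above on a function of the form $f = f_0 + h$ with $h \in J$, using the flexibility from the no-finite-field-obstruction hypothesis to preserve the integrality of $A$ over $\mathfrak{R}[f]$ — so that $\tau$ inherits finiteness, and hence surjectivity, from $\tau_0$ while acquiring the prescribed closed-embedding and \'{e}taleness properties at $Z$.
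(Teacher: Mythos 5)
Your construction of the closed embedding $\tau|_Z$ is a genuinely different route from the paper's. The paper first builds the embedding $i\colon Z\hookrightarrow \mathbb{A}^1_{\mathfrak{R}}$ directly, by using the Cohen structure theorem to identify $\hat{\mathcal{O}}_{C_{\kappa_{\mathfrak{m}}},z_i}$ with $\hat{\mathcal{O}}_{\mathbb{A}^1_{\kappa_{\mathfrak{m}}},\hat z_i}$ on each closed fibre, truncating to get $\mathcal{O}_{C,z_i}/\mathcal{I}_i^{r_i}\simeq \mathcal{O}_{\mathbb{A}^1,\hat z_i}/\mathfrak{m}^{r_i}$, and lifting with Nakayama; the function $s$ is then chosen afterwards as any extension of $i^{\ast}(t)$. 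You instead go directly for a global $f\in A$ by prescribing its $2$-jet at the closed points of $Z$. This is slightly cleaner, but your precise normalization (``\,$f-\tilde a_z$ projects to a uniformizer for an arbitrary lift $\tilde a_z$'') is not quite the right condition: in the separable case $d\tilde a_z$ need not vanish in $\Omega_{C_{\kappa_{\mathfrak{m}}}/\kappa_{\mathfrak{m}}}\otimes\kappa(z)$, so $df=d\tilde a_z+d(f-\tilde a_z)$ may cancel, and the induced map on graded pieces can fail to be surjective. The correct $2$-jet condition is that the class of $f$ in $\kappa(z)$ is the chosen generator $a_z$ \emph{and} $df$ generates $\Omega_{C/\mathfrak{R}}\otimes\kappa(z)$; then $\kappa_{\mathfrak{m}}[t]\to\mathcal{O}_{C_{\kappa_{\mathfrak{m}}},z}$ is \'etale at $z$, hence induces isomorphisms on all truncations $\mathcal{O}/\mathfrak{m}_z^r$, which is what you need to replace the paper's Cohen-structure step. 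This refinement is easy, and with it your treatment of the closed-embedding, flatness, quasi-finiteness, and \'etaleness at $Z$ is sound.

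The real gap is in your surjectivity argument. You propose to start from a finite $\tau_0$ and replace $f_0$ by $f_0+h$ with $h\in J$, asserting that the no-finite-field-obstruction flexibility lets you ``preserve the integrality of $A$ over $\mathfrak{R}[f]$.'' That is not true as stated: adding an element of an ideal can destroy finiteness. For instance on $C=\{xy=1\}\subset\mathbb{A}^2_k$ the function $f_0=x+y$ is finite onto $\mathbb{A}^1_k$, while $f_0-y=x$ is an open immersion onto $\mathbb{G}_m$, so an uncontrolled perturbation of a finite map by a regular function need not remain finite. The no-finite-field-obstruction hypothesis only governs the residue-field arithmetic at the points of $Z$; it gives no control on the behaviour of $f$ ``at infinity,'' which is exactly what finiteness requires. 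To preserve finiteness one has to fix the polar behaviour, which is what the paper does in Proposition~\ref{map-to-affine-line} by passing to the compactification $\hat{\mathcal{X}}$ and building $\hat\tau=[t_\infty^n:t_1]$ via sections of an ample line bundle that are controlled along $\mathcal{D}=\hat{\mathcal{X}}\setminus\mathcal{X}$. Without such control, your argument for surjectivity is incomplete. (For what it is worth, the paper's own proof of this lemma does not actually address surjectivity either — the conditions imposed on $s$ do not force it, e.g. $s=t-1$ on $\mathbb{G}_m$ with $Z=\{1\}$ — and the genuine surjectivity claim is only recovered in Proposition~\ref{map-to-affine-line} through the compactification; so the difficulty you ran into is real, but your proposed fix does not close it.)
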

\begin{proof}
Pick a maximal ideal $\mathfrak{m}\subset \mathfrak{R}$, on the smooth $\kappa_{\mathfrak{m}}$-curve $C_{\kappa_{\mathfrak{m}}}$ there are closed points $z_i\in Z_{\kappa_{\mathfrak{m}}}$. The smoothness at $z_i$ yields a factorization $C_{\kappa_{\mathfrak{m}}}\to \mathbb{A}^1_{\kappa_{\mathfrak{m}}}\to \mathrm{Spec}\, \kappa_{\mathfrak{m}}$ such that $\kappa(z_i)$ is a finite separable extension of a simple algebraic extension of $\kappa_{\mathfrak{m}}$, hence a primitive element theorem implies that $\kappa(z_i)/\kappa_{\mathfrak{m}}$ is simple  so that there is a point $\hat{z}_i\in \mathbb{A}^1_{\kappa_{\mathfrak{m}}}$ such that $\kappa(z_i)\simeq \kappa(\hat{z}_i)$. 
Further, since there is no finite field obstruction, we may assume that these $\hat{z}_i$ are distinct.
Now, we write $Z_{\kappa_{\mathfrak{m}}}$ as 
\[
\textstyle \text{$Z_{\kappa_{\mathfrak{m}}}=\coprod_{i=1}^l \mathrm{Spec} (\mathcal{O}_{C_{\kappa_{\mathfrak{m}}}, z_i}/\mathcal{I}_i^{r_i})$\quad where $\mathcal{I}_i$ is the ideal sheaf of $z_i^{\mathrm{red}}$ and $r_i>0$.}
\]
Since $C_{\kappa_{\mathfrak{m}}}$ is $\kappa_{\mathfrak{m}}$-smooth, the completions $\hat{\mathcal{O}}_{C_{\kappa_{\mathfrak{m}}}, z_i}$ are regular complete hence by \cite{Bou98}*{IX, \S3, \textnumero 3, Thm.~1}, the relations $\kappa(\hat{z}_i)\simeq \kappa(z_i)$ yield isomorphisms
\[
\text{$\hat{\mathcal{O}}_{C_{\kappa_{\mathfrak{m}}},z_i}\simeq \hat{\mathcal{O}}_{\mathbb{A}^1_{\kappa_{\mathfrak{m}}}, \hat{z}_i}$\quad and in particular, isomorphisms $\mathcal{O}_{C_{\kappa_{\mathfrak{m}}},z_i}/\mathcal{I}_i^{r_i}\simeq \mathcal{O}_{\mathbb{A}^1_{\kappa_{\mathfrak{m}}}, y_i}/\mathfrak{m}_{\hat{z}_i}^{r_i}$.}
\]
Using these isomorphisms and Nakayama's lemma, we obtain a closed embedding $i\colon Z_{\kappa_{\mathfrak{m}}}\hookrightarrow \mathbb{A}^1_{\mathfrak{R}}$.
Choose a section $s\in \Gamma(C,\mathcal{O}_C)$ that vanishes at a closed point of each irreducible fiber of $C_{\kappa_{\mathfrak{m}}}$ and coincides with the pullback of the coordinate via $i$, then $s$ determines a morphism $\tau\colon C\to \mathbb{A}^1_{\mathfrak{R}}$ such that $\tau|_Z$ is a closed embedding. Further, $\tau$ is flat by the fiberal criterion of flatness \cite{EGAIV2}*{6.1.5} and \cite{EGAIV3}*{11.3.11}.
So, $\tau$ is \'etale along $Z$ since the residue field extensions are trivial. Finally, $\tau$ is quasi-finite because so is $i$ and the quasi-finite locus \cite{SP}*{01TI} of $\tau$ is open.
\end{proof}

\begin{prop}\label{map-to-affine-line} 
For a semilocal domain $\mathfrak{R}$ and its spectrum $U$, an affine $U$-smooth scheme of relative dimension one
\[
\textstyle p_U\colon \mathcal{X}\to U   
\]
 with a $U$-finite closed subscheme defined by a section $f\in \Gamma(\mathcal{X},\mathcal{O}_{\mathcal{X}})$, if $p_U$ fits into the diagram of a weak elementary fibration 
\[
\xymatrix{
    \mathcal{X} \ar[rd]_{p_U} \ar@{^{(}->}[r] & \hat{\mathcal{X}} \ar[d]^{\hat{p}_U} & \mathcal{D} \ar[ld] \ar@{_{(}->}[l] \\
                                               & U                                        &     
                                               }
\]
with a section $\Delta$ of $p_U$ such that for every maximal ideal $\mathfrak{m}\subset \mathfrak{R}$ and every integer $d\geq 1$, we have
\[
\#\{z\in \mathcal{Z}_{\kappa_{\mathfrak{m}}}\,|\, [\kappa(z): \kappa_{\mathfrak{m}}]=d \} \leq \#\{y\in \mathbb{A}^1_{\kappa_{\mathfrak{m}}}\,|\, [\kappa(y): \kappa_{\mathfrak{m}}]=d \}
\]
and the point $\Delta(\{\mathfrak{m}\})$ is the unique $\kappa_{\mathfrak{m}}$-point of $\mathcal{Z}_{\mathfrak{m}}$, then there is a finite surjective $U$-morphism
\[
\tau \colon \mathcal{X}\to \mathbb{A}^1_U
\]
that restricts to a closed embedding $\tau|_{\mathcal{Z}}\colon \mathcal{Z}\hookrightarrow \mathcal{X}$ enjoying the following properties
\begin{itemize}
\item[(i)] $\tau$ is \'etale along $\mathcal{Z}\cup \Delta(U)$;
\item[(ii)] $\tau^{-1}(\tau(\mathcal{Z}))=\mathcal{Z}\sqcup \mathcal{Z}'$ and $\mathcal{Z}'\cap \Delta(U)=\emptyset$;
\item[(iii)] $\tau^{-1}(\{0\}\times U)=\Delta(U)\sqcup \mathcal{D}$ and $\mathcal{D}\cap \mathcal{Z}=\emptyset$;
\item[(iv)] $\tau^{-1}(\{1\}\times U)\cap \mathcal{Z}=\emptyset$;
\item[(v)] there is monic polynomial $h\in R[t]$ such that 
\[
(h)=\ker (\mathfrak{R}[t]\to \Gamma(\mathcal{X},\mathcal{O}_{\mathcal{X}})/(f)).
\]
\end{itemize}
\end{prop}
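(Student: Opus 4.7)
The plan is to construct $\tau$ as the restriction to $\mathcal{X}$ of a proper morphism $\bar{\tau}\colon \hat{\mathcal{X}} \to \mathbb{P}^1_U$ that sends the divisor at infinity $\mathcal{D}$ to $\infty$, arranged so that the prescribed behavior on $\mathcal{Z}$, $\Delta(U)$, and the fibres over $0$ and $1$ is built in. Concretely, I would exhibit $\bar{\tau}$ via a suitable global section of $\mathcal{O}_{\hat{\mathcal{X}}}(n\mathcal{D})$ for a large $n$; finiteness of $\tau$ then follows from properness of $\hat{\mathcal{X}} \to U$ together with $\bar{\tau}^{-1}(\infty) = \mathcal{D}$.

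First, on each closed fibre over a maximal ideal $\mathfrak{m} \subset \mathfrak{R}$, I would use the hypothesis on the absence of finite-field obstructions together with the primitive-element argument of Lemma \ref{toaffineline} to produce a rational function $\varphi_\mathfrak{m}$ on $\hat{\mathcal{X}}_{\kappa_\mathfrak{m}}$ with pole divisor $n\mathcal{D}_{\kappa_\mathfrak{m}}$, a simple zero at $\Delta(\{\mathfrak{m}\})$, restricting to a closed immersion $\mathcal{Z}_{\kappa_\mathfrak{m}} \hookrightarrow \mathbb{A}^1_{\kappa_\mathfrak{m}}$ with image avoiding $\{1\}$, and unramified along $\mathcal{Z}_{\kappa_\mathfrak{m}} \cup \{\Delta(\{\mathfrak{m}\})\}$. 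Next, taking $n$ large enough that $H^1(\hat{\mathcal{X}}_{\kappa_\mathfrak{m}}, \mathcal{O}(n\mathcal{D}_{\kappa_\mathfrak{m}})(-E_\mathfrak{m})) = 0$ for the auxiliary divisor $E_\mathfrak{m}$ encoding these local conditions (standard relative Serre vanishing on the smooth projective curve $\hat{\mathcal{X}} \to U$), the direct image $(p_U)_\ast \mathcal{O}(n\mathcal{D})$ becomes a vector bundle on $U$ whose formation commutes with base change. Since $U$ is semilocal affine, this bundle is free, and a Nakayama plus Chinese-remainder argument across the finitely many closed points of $\mathfrak{R}$ lifts the collection $\{\varphi_\mathfrak{m}\}$ to a single global section $\sigma \in H^0(\hat{\mathcal{X}}, \mathcal{O}(n\mathcal{D}))$. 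Pairing $\sigma$ with a reference section cutting out $n\mathcal{D}$ gives $\bar{\tau}$, and since $\bar{\tau}^{-1}(\infty) = \mathcal{D}$ while $\hat{\mathcal{X}} \to U$ is proper and $\bar{\tau}$ is quasi-finite, $\bar{\tau}$ is finite; restricting gives the desired finite surjective $\tau\colon \mathcal{X} \to \mathbb{A}^1_U$.

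Properties (i), (iii), (iv) are built into the fiberwise prescription and transfer to $\tau$ through the lifting. For (ii), \'etaleness of $\tau$ along $\mathcal{Z}$ forces the scheme-theoretic preimage $\tau^{-1}(\tau(\mathcal{Z}))$ to split in a neighbourhood of $\mathcal{Z}$ as $\mathcal{Z} \sqcup \mathcal{Z}'$, and the prescription $\tau(\Delta(U)) = 0 \notin \tau(\mathcal{Z})$ from Step 1 guarantees $\mathcal{Z}' \cap \Delta(U) = \emptyset$. For (v), the closed immersion $\tau|_\mathcal{Z}\colon \mathcal{Z} \hookrightarrow \mathbb{A}^1_U$ together with the fact that $\mathcal{O}(\mathcal{Z})$ is a finite projective $\mathfrak{R}$-algebra — hence free of some constant rank $d$, since $\mathfrak{R}$ is semilocal — means that the image of $t$ in $\mathcal{O}(\mathcal{Z})$ generates $\mathcal{O}(\mathcal{Z})$ as an $\mathfrak{R}$-algebra; its minimal polynomial is then monic of degree $d$ and generates the kernel of $\mathfrak{R}[t] \to \mathcal{O}(\mathcal{Z})$.

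The central difficulty, as I see it, lies in Step 2: coordinating the fiberwise constructions $\varphi_\mathfrak{m}$ so that they are realized simultaneously by a single global section while all the étaleness, injectivity, non-coincidence, and value conditions are preserved. The hypothesis bounding the number of low-degree points of $\mathcal{Z}_{\kappa_\mathfrak{m}}$ is what makes the fiberwise problem solvable, and the vanishing of $H^1$ for large $n$ is what makes lifting possible, but avoiding spurious coincidences — extra ramification, stray hits of $\{0\}$ or $\{1\}$ — requires an open-density argument inside the space of sections, which is the only non-mechanical part of the proof.
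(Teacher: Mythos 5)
Your proposal follows essentially the same route as the paper's own proof: both compactify to a morphism $\hat{\mathcal{X}}\to\mathbb{P}^1_U$ given by a pair of sections of $\mathcal{O}_{\hat{\mathcal{X}}}(n\mathcal{D})$ for a large $n$ — one vanishing exactly on $\mathcal{D}$ (so that the preimage of $\infty$ is $\mathcal{D}$) and the other produced on the closed fibres via Lemma \ref{toaffineline} and then lifted globally by Serre vanishing — and deduce finiteness of $\tau$ from properness of $\hat{\mathcal{X}}\to U$. The open-density concern you flag about avoiding spurious ramification and stray hits of $\{0\}$, $\{1\}$ is indeed the delicate point, and the paper's own treatment of those fine conditions is similarly brisk.
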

\begin{proof}
The construction in Lemma \ref{toaffineline} shows that there is a closed embedding $\rho\colon \mathcal{Z}\hookrightarrow \mathbb{A}^1_{U}$.
By assumption, $\mathcal{D}:= \hat{\mathcal{X}}\backslash \mathcal{X}$ is an exceptional Cartier divisor. 
The construction of the elementary fibration implies that $\mathcal{L}:= \mathcal{O}_{\hat{\mathcal{X}}}(\mathcal{D})$ is the pullback of $\mathcal{O}(1)$ on a projective space, so $\mathcal{L}$ is ample.
To find $\tau$, we first construct a finite surjective $U$-morphism $\hat{\tau}\colon \hat{\mathcal{X}}\to \mathbb{P}^1_U$ via suitable sections $t_1, t_{\infty}\in \Gamma(\hat{\mathcal{X}},\mathcal{L}^{\otimes n})$ for a large integer $n>0$.
Let $t_{\infty}$ be a global section of $\mathcal{L}$ whose vanishing locus is exactly $\mathcal{D}$.
To find $t_1$, for the closed fibers $\hat{\mathcal{X}}_{\mathbf{u}}$, we construct $t_{1,\mathbf{u}}\in H^0(\hat{\mathcal{X}}_{\mathbf{u}}, \mathcal{L}^{\otimes n} )$ for an integer $n>0$ satisfying the following conditions
\begin{itemize}
\item[(a)] $t_{1,\mathbf{u}}|_{\mathcal{D}_{\mathbf{u}}}$ has no zeros and $g:= t_{1,\mathbf{u}}/t_{\infty, \mathbf{u}}^n$ is a regular function in $\mathcal{O}(\mathcal{X}_{\mathbf{u}})$  for an integer $n>0$;
\item[(b)] $t_{1,\mathbf{u}}|_{\mathcal{Z}_{\mathbf{u}}}=\rho \cdot t_{\infty, \mathbf{u}}^n|_{\mathcal{Z}_{\mathbf{u}}}$.
\end{itemize}
Let $J\subset \mathcal{O}_{\hat{\mathcal{X}}_{\mathbf{u}}}$ be the ideal sheaf defining $\mathcal{Z}_{\mathbf{u}}\sqcup \mathcal{D}_{\mathbf{u}}$.
We consider the following short exact sequence
\[
0\to J\to \mathcal{O}_{\hat{\mathcal{X}}_{\mathbf{u}}}\to \mathcal{O}_{\mathcal{Z}_{\mathbf{u}}\sqcup \mathcal{D}_{\mathbf{u}}}\to 0.
\]
By Serre's theorem \cite{EGAIII1}*{2.6.1}, there is an integer $n>0$ such that $H^1(\hat{\mathcal{X}}_{\mathbf{u}}, \mathcal{L}^{\otimes n} \otimes J )=0$, so
\[
\text{$H^0(\hat{\mathcal{X}}_{\mathbf{u}}, \mathcal{L}^{\otimes n}|_{\hat{\mathcal{X}}_{\mathbf{u}}}) \twoheadrightarrow H^0(\mathcal{Z}_{\mathbf{u}}, \mathcal{L}^{\otimes n} |_{\mathcal{Z}_{\mathbf{u}}})\oplus H^0(\mathcal{D}_{\mathbf{u}}, \mathcal{L}^{\otimes n}|_{\mathcal{D}_{\mathbf{u}}})$\quad  is surjective.}
\]
Note that $H^0(\mathcal{D}_{\mathbf{u}}, \mathcal{L}^{\otimes n}|_{\mathcal{D}_{\mathbf{u}}})$ is a semilocal ring, it contains an element $\alpha_{\mathbf{u}}$ that has no zeros on $\mathcal{D}_{\mathbf{u}}$. 
By this surjective map, we can lift the relation $t_{1,\mathbf{u}}|_{\mathcal{Z}_{\mathbf{u}}}=\rho\cdot t_{\infty, \mathbf{u}}^n|_{\mathcal{Z}_{\mathbf{u}}}$ over $\hat{\mathcal{X}}_{\mathbf{u}}$.
Therefore, the element $(\rho\cdot t^n_{\infty,\mathbf{u}}|_{\hat{\mathcal{X}}_{\mathbf{u}}}, \alpha_{\mathbf{u}})$ lifts to the desired $t_{1,\mathbf{u}}$ such that $g:= t_{1,\mathbf{u}}/t^n_{\infty, \mathbf{u}}$ is a regular function in $\mathcal{O}(\mathcal{X}_{\mathbf{u}})$, the restriction $t_{1,\mathbf{u}}|_{\mathcal{D}_{\mathbf{u}}}$ has no zeros, and the condition (2) is satisfied by construction.

It remains to extend $t_{1,\mathbf{u}}$ to a section $t_{1}\in \Gamma(\hat{\mathcal{X}}, \mathcal{L}^{\otimes n})$.
By Serre's criterion and enlarging $n$ if necessary, the group $H^1(\hat{\mathcal{X}}, I_{\mathbf{u}}(n))=0$ for the ideal sheaf $I_{\mathbf{u}}$ defining $\hat{\mathcal{X}}_{\mathbf{u}}\subset \hat{\mathcal{X}}$ so the map
\[
\text{$H^0(\hat{\mathcal{X}}, \mathcal{L}^{\otimes n} )\twoheadrightarrow H^0(\hat{\mathcal{X}}_{\mathbf{u}}, \mathcal{L}^{\otimes n} )$\quad is surjective.}
\]
It suffices to take $t_1$ as the preimage of $t_{1,\mathbf{u}}$ under this surjection.
Since $\mathcal{D}$ is finite over $U$, it is semilocal. Hence $t_1$ has no zeros on $\mathcal{D}$ since $t_{1,\mathbf{u}}$ has no zeros on $\mathcal{D}_{\mathbf{u}}$.
Therefore, the morphism
\[
\text{$\hat{\tau}\colon \hat{\mathcal{X}}\overset{[t_{\infty}^n\colon t_1]}{\longrightarrow} \mathbb{P}^1_U$\quad is well-defined.}
\]
Finally, the restriction $\hat{\tau}|_{\mathcal{Z}}$ is the closed embedding $\mathcal{Z}\hookrightarrow \mathbb{A}^1_U$ by Nakayama' s lemma.
By construction, the restriction $\tau:= \hat{\tau}|_{\mathcal{X}}\colon \mathcal{X}\to \mathbb{A}^1_U$ is finite and surjective.
\end{proof}

\begin{proof}[Proof of Theorem \ref{Moving_for_DVR}] 
We start with an irreducible $R$-smooth projective scheme $X$ of pure relative dimension $r$ over a discrete valuation ring $R$, a finite subset $\mathbf{x}$ of closed  points of $X$, a closed subset $Z\subset X$ that avoids all generic points of the closed fibre of $X$.
Then by Theorem~\ref{Very_General_Diagram}, there are an open $S\subset \mathbb A^r_R$, an open neighborhood $\dot{X}_S$ of $\mathbf{x}$ in $X$, a weak elementary fibration $\dot{q}_S\colon \dot{X}_S\rightarrow S$ and a commutative diagram 
\begin{equation}
    \xymatrix{
     \dot X_S\ar[drr]_{\dot q_S}\ar[rr]^{j_S}&&
\hat X_S\ar[d]_{\hat q_S}&& \mathbb W_S\ar[ll]_{i}\ar[lld]^{\mathrm{pr}_S} &\\
     && S  &\\    }
\end{equation}
of $S$-schemes, which is a diagram of the weak elementary $\dot{Z}_S$-fibration $\dot{q}_S$.
For $U=\mathrm{Spec}\, \mathcal{O}_{X,\mathbf{x}}$, we take $\mathcal Z:= U\times_S \dot{Z}_S$ and $\mathcal X:= U\times_S \dot{X}_S$.
Hence, we obtain a smooth $U$-morphism $p_U\colon \mathcal{X}\to U$ of pure relative dimension one.
The canonical embedding $\dot{\mathrm{can}}: U\hookrightarrow \dot{X}_S$ induces a section $\Delta\colon U\rightarrow \mathcal X$. 
Hence, for the morphism $p_X\colon \mathcal X\rightarrow X$, we have $p_X\circ \Delta=\mathrm{can}$, where $\mathrm{can}: U\hookrightarrow X$ is the canonical embedding.
Clearly, the closed subscheme $\mathcal{Z}\subset \mathcal{X}$ is $U$-finite.
Particularly,  we get two commutative squares
\[
\xymatrix{
    \mathcal{X}- \mathcal{Z} \ar@{^{(}->}[r] \ar[d] & \mathcal{X} \ar[d]^{p_{X}} & \mathcal{Z} \ar@{_{(}->}[l] \ar[d] \\
X- Z \ar@{^{(}->}[r]           & X                  & Z \ar@{_{(}->}[l]
}
\]
It remains to construct the Nisnevich square in Theorem \ref{Moving_for_DVR}. 
First, we overcome the finite field obstruction for embedding $\mathcal Z$ into $\mathbb A^1_U$.
By Lemma \ref{overcome}, there is a smooth morphism $\tilde{\mathcal{X}}\to U$ of pure relative dimension one with a finite morphism $\phi\colon \tilde{\mathcal{X}}\to \mathcal{X}$ such that $\tilde{\mathcal{Z}}:= \phi^{-1}(\mathcal{Z}\cup \Delta(U))$ satisfies the following properties
\begin{itemize}
\item[(i)] $\tilde{\mathcal{Z}}$ is a  $U$-finite closed subscheme of $\tilde{X}$;
\item[(ii)] $\Delta$ lifts to a section $\Delta'\colon U\to \tilde{X}'$, namely $\Delta= \phi\circ \Delta'$;
\item[(iii)] $\phi|_{\tilde{\mathcal{Z}}}\colon \tilde{\mathcal{Z}}\twoheadrightarrow \mathcal{Z}$ is finite \'etale surjective;
\item[(iv)] there is no finite field obstruction for embedding $\tilde{\mathcal Z}$ into $\mathbb{A}_U^1$.
\end{itemize}
In particular, $\Delta'(U)\subset \tilde{\mathcal Z}$ and for $\tilde{p}_X\colon \tilde{X}\rightarrow X$, we have $\tilde{p}_X\circ \Delta'=p_X\circ\phi \circ \Delta'= p_X\circ \Delta=\mathrm{can}$.
Consequently, by Proposition \ref{map-to-affine-line}, there is a finite flat surjective $U$-morphism $\tau\colon \tilde{\mathcal{X}}\rightarrow \mathbb A^1_U$ such that $\tau|_{\tilde{\mathcal Z}}\colon \tilde{\mathcal Z}\hookrightarrow \mathbb A^1_U$ is a closed embedding and $\tau$ is \'etale at $\tilde{\mathcal Z}$.
Moreover, since there is no finite field obstruction, when we embedding $\tilde{\mathcal Z}$ we can make sure that $\tau(\tilde{\mathcal Z})\cap (\{1\}\times U)=\emptyset$.
Note that $\tau$ is a finite flat cover, so we let $\mathcal{X}^{\circ}$ be the complement of the branch locus in $\tilde{\mathcal X}$, then $\tau |_{\mathcal X^{\circ}}\colon \mathcal X^{\circ}\rightarrow \mathbb A^1_U$ is \'etale.
Since $\tau$ is \'etale at $\tilde{\mathcal Z}$, we know that $\tilde{\mathcal Z}\subset \mathcal X^{\circ}$.
By the property (ii) in Proposition \ref{map-to-affine-line}, if $\tau^{-1}(\tau(\tilde{\mathcal Z}))=\tilde{\mathcal Z}\sqcup \tilde{\mathcal Z}'$, then $\Delta(U)\cap \tilde{\mathcal Z}'=\emptyset$.
Now we put $\mathcal X^{\circ\circ}:= \mathcal X^{\circ}-\tilde{\mathcal Z}'$, then 
\begin{itemize}
\item[(a)] $\tau\colon \mathcal X^{\circ\circ}\rightarrow \mathbb A^1_U$ is \'etale;
\item[(b)] $\Delta'(U)\subset \tilde{\mathcal Z}\subset \mathcal X^{\circ\circ}$ and $\tau|_{\tilde{\mathcal Z}}\colon \tilde{\mathcal Z}\hookrightarrow \mathbb A^1_U$ is a closed embedding;
\item[(c)] $\tau^{-1}(\tau(\tilde{\mathcal Z}))=\tilde{\mathcal Z}$ in $\mathcal X^{\circ\circ}$.
\end{itemize}
In particular, we obtain an \textbf{\'etale} morphism $\tau\colon \mathcal{X}^{\circ\circ}\to \mathbb{A}^1_U$ such that $\tau|_{\tilde{\mathcal{Z}}}\colon \tilde{\mathcal{Z}}\hookrightarrow \mathbb{A}^1_U$ is a closed embedding, fitting into the commutative diagram with the two leftmost Cartesian squares
\[
    \xymatrix{
\mathbb{A}^1_U-\tau(\tilde{\mathcal{Z}}) \ar@{^{(}->}[d]      & \mathcal{X}^{\circ\circ}-\tilde{\mathcal{Z}} \ar@{^{(}->}[d] \ar[l] \ar@{^{(}->}[r]         & \tilde{\mathcal X}-\tilde{\mathcal Z} \ar@{^{(}->}[d] \ar[r] & \mathcal{X}-\mathcal{Z} \ar[r]  \ar@{^{(}->}[d] & \dot{X}_S-\dot{Z}_S \ar@{^{(}->}[d] \ar[r] & X-Z \ar@{^{(}->}[d] \\
\mathbb{A}^1_U                       & \mathcal{X}^{\circ\circ} \ar[l]^{\tau} \ar@{^{(}->}[r]                & \tilde{\mathcal X} \ar[r]^{\phi}       &   \mathcal X    \ar[r]      & \dot{X}_S \ar[r]                         & X                   \\
\tau(\tilde{\mathcal Z}) \ar@{^{(}->}[u] & \tilde{\mathcal{Z}} \ar@{^{(}->}[u] \ar[l]_{\sim} \ar[r]^{=} & \tilde{\mathcal{Z}} \ar@{^{(}->}[u] \ar[r]^{\phi|_{\tilde{\mathcal Z}}} & \mathcal Z \ar[r] \ar@{^{(}->}[u]  & \dot{Z}_S \ar@{^{(}->}[u] \ar[r]         & Z \ar@{^{(}->}[u] 
    }
\]
In particular, the upper left square is a Nisnevich square in $\mathbf{Sm}_U$, so after replacing $\mathcal X$ by $\mathcal{X}^{\circ\circ}$, $\mathcal Z$ by $\tilde{\mathcal Z}$, $\Delta$ by $\Delta'$, we finish the proof. \qedhere
\end{proof}

\begin{bibdiv}
  \begin{biblist}*{labels={alphabetic}}




\bib{Bou98}{book}{
    AUTHOR = {Bourbaki, Nicolas},
     TITLE = {Commutative algebra. {C}hapters 1--7},
    SERIES = {Elements of Mathematics (Berlin)},
      NOTE = {Translated from the French,
              Reprint of the 1989 English translation},
 PUBLISHER = {Springer-Verlag, Berlin},
      YEAR = {1998},
     PAGES = {xxiv+625},
      ISBN = {3-540-64239-0},
      label = {BouAC},
}

\bib{EGAIII1}{article}{
   author={Grothendieck, A.},
   author={Dieudonn\'{e}, J.},
   title={\'El\'ements de g\'eom\'etrie alg\'ebrique. III. \'Etude
   cohomologique des faisceaux coh\'erents. I},
   journal={Inst. Hautes \'Etudes Sci. Publ. Math.},
   number={11},
   date={1961},
   pages={167},
   issn={0073-8301},
   review={\MR{0217085 (36 \#177c)}},
   label={ÉGA~III$_{1}$},
}

\bib{EGAIV2}{article}{
   author={Grothendieck, Alexander},
   author={Dieudonn\'{e}, Jean},
   title={\'El\'ements de g\'eom\'etrie alg\'ebrique. IV. \'Etude locale des
   sch\'emas et des morphismes de sch\'emas. II},
   language={French},
   journal={Inst. Hautes \'Etudes Sci. Publ. Math.},
   number={24},
   date={1965},
   pages={231},
   issn={0073-8301},
   review={\MR{0199181 (33 \#7330)}},
   label={EGA~IV$_{2}$},
}

\bib{EGAIV3}{article}{
   author={Grothendieck, A.},
   author={Dieudonn\'{e}, J.},
   title={\'El\'ements de g\'eom\'etrie alg\'ebrique. IV. \'Etude locale des
   sch\'emas et des morphismes de sch\'emas. III},
   journal={Inst. Hautes \'Etudes Sci. Publ. Math.},
   number={28},
   date={1966},
   pages={255},
   issn={0073-8301},
   review={\MR{0217086 (36 \#178)}},
   label={EGA~IV$_{3}$},
}

\bib{EGAIV4}{article}{
   author={Grothendieck, Alexander},
   author={Dieudonn\'{e}, Jean Alexandre Eug\`ene},
   title={\'El\'ements de g\'eom\'etrie alg\'ebrique. IV. \'Etude locale des
   sch\'emas et des morphismes de sch\'emas IV},
   language={French},
   journal={Inst. Hautes \'Etudes Sci. Publ. Math.},
   number={32},
   date={1967},
   pages={361},
   issn={0073-8301},
   review={\MR{0238860 (39 \#220)}},
   label={EGA~IV$_{4}$},
}
\bib{SGA3Inew}{collection}{
   title={Sch\'emas en groupes (SGA 3). Tome I. Propri\'et\'es g\'en\'erales
   des sch\'emas en groupes},
   language={French},
   series={Documents Math\'ematiques (Paris) [Mathematical Documents
   (Paris)], 7},
   editor={Gille, Philippe},
   editor={Polo, Patrick},
   note={S\'eminaire de G\'eom\'etrie Alg\'ebrique du Bois Marie 1962--64.
   [Algebraic Geometry Seminar of Bois Marie 1962--64];
   A seminar directed by M. Demazure and A. Grothendieck with the
   collaboration of M. Artin, J.-E. Bertin, P. Gabriel, M. Raynaud and J-P.
   Serre;
   Revised and annotated edition of the 1970 French original},
   publisher={Soci\'et\'e Math\'ematique de France},
   place={Paris},
   date={2011},
   pages={xxviii+610},
   isbn={978-2-85629-323-2},
   review={\MR{2867621}},
   label={SGA~3$_{\text {I new}}$},
}

\bib{SGA3II}{book}{
   title={Sch\'emas en groupes. II: Groupes de type multiplicatif, et
   structure des sch\'emas en groupes g\'en\'eraux},
   language={French},
   series={S\'eminaire de G\'eom\'etrie Alg\'ebrique du Bois Marie 1962/64
   (SGA 3). Dirig\'e par M. Demazure et A. Grothendieck. Lecture Notes in
   Mathematics, Vol. 152},
   publisher={Springer-Verlag, Berlin-New York},
   date={1970},
   pages={ix+654},
   review={\MR{0274459 (43 \#223b)}},
   label={SGA~3$_{\text {II}}$},
}

\bib{SGA3IIInew}{collection}{
   title={Sch\'emas en groupes (SGA 3). Tome III. Structure des sch\'emas en
   groupes r\'eductifs},
   language={French},
   series={Documents Math\'ematiques (Paris) [Mathematical Documents
   (Paris)], 8},
   editor={Gille, Philippe},
   editor={Polo, Patrick},
   note={S\'eminaire de G\'eom\'etrie Alg\'ebrique du Bois Marie 1962--64.
   [Algebraic Geometry Seminar of Bois Marie 1962--64];
   A seminar directed by M. Demazure and A. Grothendieck with the
   collaboration of M. Artin, J.-E. Bertin, P. Gabriel, M. Raynaud and J-P.
   Serre;
   Revised and annotated edition of the 1970 French original},
   publisher={Soci\'et\'e Math\'ematique de France, Paris},
   date={2011},
   pages={lvi+337},
   isbn={978-2-85629-324-9},
   review={\MR{2867622}},
   label={SGA~3$_{\text {III new}}$},
}


\bib{Art73}{misc}{
 Author = {Artin, M.},
 Title = {Comparaison avec la cohomologie classique: cas d'un sch{\'e}ma lisse},
 Year = {1973},
 Language = {French},
 HowPublished = {S{\'e}m. {Geom}. alg{\'e}brique {Bois}-{Marie} 1963/64, {SGA} 4, {Vol}. 3, {Exp}. {No}. {XI}, {Lect}. {Notes} {Math}. 305, 64-78 (1973).},
 DOI = {10.1007/BFb0070717},
 label ={A},
}



\bib{Bh}{article}{
 Author = {Bhatwadekar, S. M.},
 Title = {Analytic isomorphism and category of finitely generated modules},
 Journal = {Commun. Algebra},
 ISSN = {0092-7872},
 Volume = {16},
 Number = {9},
 Pages = {1949--1958},
 Year = {1988},
 Language = {English},
 DOI = {10.1080/00927878808823669},
 label = {Bh},
}

\bib{Ces22}{article}{
  author={{\v{C}}esnavi{\v {c}}ius, K{\k{e}}stutis},
  title={Grothendieck-{Serre} in the quasi-split unramified case},
  journal={Forum Math. Pi},
  issn={2050-5086},
  volume={10},
  pages={30},
  note={Id/No e9},
  year={2022},
  language={English},
  doi={10.1017/fmp.2022.5},
  keywords={14L15,11E81,14M17,20G10},
}

\bib{Ces22b}{article}{
  title={Torsors on the complement of a smooth divisor},
  author={{\v{C}}esnavi{\v{c}}ius, K{\k{e}}stutis},
  journal={Available at \url{https://www.imo.universite-paris-saclay.fr/~cesnavicius/torsors-complement.pdf}},
  year={2022},
}

\bib{Ch}{article}{
 Author = {Chernousov, V.},
 Title = {Variations on a theme of groups splitting by a quadratic extension and {Grothendieck}-{Serre} conjecture for group schemes {{\(F_4\)}} with trivial {{\(g_3\)}} invariant},
 Journal = {Doc. Math.},
 ISSN = {1431-0635},
 Volume = {Extra Vol.},
 Pages = {147--169},
 Year = {2010},
 Language = {English},
 label ={Ch},
}

\bib{C-T/O}{article}{
 Author = {Colliot-Th{\'e}l{\`e}ne, Jean-Louis and Ojanguren, Manuel},
 Title = {Locally trivial principal homogeneous spaces},
 Journal = {Publ. Math., Inst. Hautes {\'E}tud. Sci.},
 ISSN = {0073-8301},
 Volume = {75},
 Pages = {97--122},
 Year = {1992},
 Language = {French},
 DOI = {10.1007/BF02699492},
 label = {C-T/O},
}

\bib{C-T/S}{article}{
 Author = {Colliot-Th{\'e}l{\`e}ne, J.-L. and Sansuc, J.-J.},
 Title = {Principal homogeneous spaces under flasque tori; applications},
 FJournal = {Journal of Algebra},
 Journal = {J. Algebra},
 ISSN = {0021-8693},
 Volume = {106},
 Pages = {148--205},
 Year = {1987},
 Language = {English},
 DOI = {10.1016/0021-8693(87)90026-3},
 label ={C-T/S},
}



\bib{FP15}{article}{
  author={Fedorov, Roman},
  author={Panin, Ivan},
  title={A proof of the Grothendieck--Serre conjecture on principal bundles over regular local rings containing infinite fields},
  journal={Publ. Math. Inst. Hautes \'Etudes Sci.},
  volume={122},
  date={2015},
  pages={169--193},
  issn={0073-8301},
  review={\MR {3415067}},
  doi={10.1007/s10240-015-0075-z},
}

\bib{Ga}{unpublished}{
    author = {Gabber, O.},
    title = {announced and still unpublished},
}

\bib{GLL13}{article}{
    AUTHOR = {Gabber, Ofer},
    AUTHOR = {Liu, Qing},
    AUTHOR = {Lorenzini, Dino},
     TITLE = {The index of an algebraic variety},
   JOURNAL = {Invent. Math.},
    VOLUME = {192},
      YEAR = {2013},
    NUMBER = {3},
     PAGES = {567--626},
      ISSN = {0020-9910},
       DOI = {10.1007/s00222-012-0418-z},
       URL = {https://doi-org.revues.math.u-psud.fr/10.1007/s00222-012-0418-z},
}


\bib{Gil}{incollection}{
 Author = {Gille, Philippe},
 Title = {The {Kneser}-{Tits} problem.},
 BookTitle = {S\'eminaire Bourbaki. Volume 2007/2008. Expos\'es 982--996},
 ISBN = {978-2-85629-269-3},
 Pages = {39--82, ex},
 Year = {2009},
 Publisher = {Paris: Soci{\'e}t{\'e} Math{\'e}matique de France (SMF)},
 Language = {French},
 Keywords = {20G15,20G30,14M22,14F42,14L35},
 URL = {smf4.emath.fr/en/Publications/Asterisque/2009/326/html/smf_ast_326_39-82.php},
 label = {Gil},
}

\bib{Guo22}{article}{
 Author = {Guo, N.},
 Title = {The {Grothendieck}-{Serre} conjecture over semilocal {Dedekind} rings},
 Journal = {Transform. Groups},
 ISSN = {1083-4362},
 Volume = {27},
 Number = {3},
 Pages = {897--917},
 Year = {2022},
 Language = {English},
 DOI = {10.1007/s00031-020-09619-8},
 Keywords = {14Lxx,20Gxx,14Fxx},
 label ={NG},
}

\bib{GPan2}{unpublished}{
    author = {Guo, N.},
    author = {Panin, I.},
    title = {Weak elementary fibrations},
    year = {2023},
    note = {preprint},
    label = {GPan2},
}

\bib{GPan3}{unpublished}{
    author = {Guo, N.},
    author = {Panin, I.},
    title = {Grothendieck--Serre for smooth projective schemes over DVR of mixed characteristic},
    year = {2023},
    note = {preprint},
    label = {GPan3},
}

\bib{Gr1}{article}{
    author = {Grothendieck, A.},
    title = {Torsion homologique et sections rationnelles},
    journal = {Séminaire Claude Chevalley Tome 3},
    year = {1958},
    label = {Gr1},
}

\bib{Gr2}{misc}{
 Author = {Grothendieck, A.},
 Title = {Le groupe de {Brauer}. {II}: {Th{\'e}orie} cohomologique},
 Year = {1966},
 Language = {French},
 HowPublished = {Dix {Expos{\'e}s} {Cohomologie} {Sch{\'e}mas}, {Adv}. {Stud}. {Pure} {Math}. 3, 67-87 (1968); {S{\'e}m}. {Bourbaki} 1965/66, {Exp}. {No}. 297, 21 p. (1966).},
 label ={Gr2},
}

\bib{Gr3}{misc}{
 Author = {Grothendieck, A.},
 Title = {Le groupe de {Brauer}. {III}: {Exemples} et compl{\'e}ments},
 Year = {1968},
 Language = {French},
 HowPublished = {Dix {Expos{\'e}s} {Cohomologie} {Sch{\'e}mas}, {Adv}. {Stud}. {Pure} {Math}. 3, 88-188 (1968).},
 label = {Gr3},
}

\bib{Hab}{article}{
 Author = {Haboush, W. J.},
 Title = {Reductive groups are geometrically reductive},
 Journal = {Ann. Math. (2)},
 ISSN = {0003-486X},
 Volume = {102},
 Pages = {67--83},
 Year = {1975},
 Language = {English},
 DOI = {10.2307/1970974},
 label = {Hab},
}

\bib{Mo}{unpublished}{
    author = {Moser, L.-F.},
    title = {Rational triviale Torseure und die Serre-Grothendiecksche Vermutung},
    note = {Diplomarbeit, 2008, http://www.mathematik.uni-muenchen.de/$\sim$lfmoser/da.pdf.},
    label = {Mo},
}



\bib{MV99}{article}{
 Author = {Morel, Fabien},
 author  = {Voevodsky, Vladimir},
 Title = {{{\(\mathbb{A}^1\)}}-homotopy theory of schemes},
 Journal = {Publ. Math., Inst. Hautes {\'E}tud. Sci.},
 ISSN = {0073-8301},
 Volume = {90},
 Pages = {45--143},
 Year = {1999},
 Language = {English},
 DOI = {10.1007/BF02698831},
 label = {MV},
}

\bib{Mu}{book}{
 Author = {Mumford, David},
 Title = {Abelian varieties},
 FSeries = {Tata Institute of Fundamental Research. Studies in Mathematics},
 Series = {Tata Inst. Fundam. Res., Stud. Math.},
 Volume = {5},
 Year = {1970},
 Publisher = {London: Oxford University Press},
 Language = {English},
 label = {Mu},
}

\bib{Na}{article}{
 Author = {Nagata, M.},
 Title = {Invariants of a group in an affine ring},
 Journal = {J. Math. Kyoto Univ.},
 ISSN = {0023-608X},
 Volume = {3},
 Pages = {369--377},
 Year = {1964},
 Language = {English},
 DOI = {10.1215/kjm/1250524787},
 label = {Na},
}

\bib{Ni1}{article}{
 Author = {Nisnevich, E. A.},
 Title = {Affine homogeneous spaces and finite subgroups of arithmetic groups over function fields},
 Journal = {Funct. Anal. Appl.},
 ISSN = {0016-2663},
 Volume = {11},
 Pages = {64--66},
 Year = {1977},
 Language = {English},
 DOI = {10.1007/BF01135540},
 label = {Ni1},
}

\bib{Ni2}{article}{
 Author = {Nisnevich, Yevsey A.},
 Title = {Espaces homog{\`e}nes principaux rationnellement triviaux et arithm{\'e}tique des sch{\'e}mas en groupe r{\'e}ductifs sur les anneaux de {Dedekind}. ({Rationally} trivial principal homogeneous spaces and arithmetic of reductive group schemes over {Dedekind} rings)},
 Journal = {C. R. Acad. Sci., Paris, S{\'e}r. I},
 ISSN = {0764-4442},
 Volume = {299},
 Pages = {5--8},
 Year = {1984},
 Language = {French},
 label = {Ni2},
}

\bib{OP1}{article}{
 Author = {Ojanguren, M.},
 author = {Panin, I.},
 Title = {A purity theorem for the {Witt} group},
 Journal = {Ann. Sci. {\'E}c. Norm. Sup{\'e}r. (4)},
 ISSN = {0012-9593},
 Volume = {32},
 Number = {1},
 Pages = {71--86},
 Year = {1999},
 Language = {English},
 DOI = {10.1016/S0012-9593(99)80009-3},
 label = {OP1},
}

\bib{OP2}{article}{
 Author = {Ojanguren, M.},
 author = {Panin, I.},
 Title = {Rationally trivial {Hermitian} spaces are locally trivial},
 Journal = {Math. Z.},
 ISSN = {0025-5874},
 Volume = {237},
 Number = {1},
 Pages = {181--198},
 Year = {2001},
 Language = {English},
 DOI = {10.1007/PL00004859},
 label = {OP2},
}

\bib{OPZ}{article}{
 Author = {Ojanguren, M.},
 author  = {Panin, I.},
 author = {Zainoulline, K.},
 Title = {On the norm principle for quadratic forms},
 Journal = {J. Ramanujan Math. Soc.},
 ISSN = {0970-1249},
 Volume = {19},
 Number = {4},
 Pages = {289--300},
 Year = {2004},
 Language = {English},
 label = {OPZ},
}

\bib{OS}{article}{
 Author = {Ojanguren, M.},
 author  ={Sridharan, R.},
 Title = {Cancellation of {Azumaya} algebras},
 Journal = {J. Algebra},
 ISSN = {0021-8693},
 Volume = {18},
 Pages = {501--505},
 Year = {1971},
 Language = {English},
 DOI = {10.1016/0021-8693(71)90133-5},
 label = {OS},
}

\bib{PS}{article}{
 Author = {Panin, I. A.},
 author  = {Suslin, A. A.},
 Title = {On a {Grothendieck} conjecture for {Azumaya} algebras},
 Journal = {St. Petersbg. Math. J.},
 ISSN = {1061-0022},
 Volume = {9},
 Number = {4},
 Pages = {1},
 Year = {1997},
 Language = {English},
 label = {PS},
}

\bib{Pan05}{unpublished}{
   author = {Panin, I.}, 
   title = {A purity theorem for linear algebraic groups},
   year = {2005},
   note = {Available at http://www.math.uiuc.edu/K-theory/0729},
}

\bib{Pan13}{unpublished}{
   author = {Panin, I.}, 
   title = {On Grothendieck--Serre's conjecture concerning principal $G$-bundles over reductive group schemes: II},
   year = {2013},
   note = {Available at http://www.math.org/0905.1423v3},
}

\bib{Pan15}{unpublished}{
   author = {Panin, I.}, 
   title = {Proof of Grothendieck--Serre conjecture on principal bundles over regular local rings containing a finite field},
   year = {2015},
   note = {Available at https://www.math.uni-bielefeld.de/lag/man/559.pdf},
}

\bib{Pan16}{article}{
  author={Panin, I.},
  title={On Grothendieck--Serre's conjecture concerning principal $G$-bundles over reductive group schemes: II},
  journal={Izv. Ross. Akad. Nauk Ser. Mat.},
  volume={80},
  date={2016},
  number={4},
  pages={131--162},
  issn={1607-0046},
  translation={ journal={Izv. Math.}, volume={80}, date={2016}, number={4}, pages={759--790}, issn={1064-5632}, },
  review={\MR {3535361}},
  doi={10.4213/im8452},
}

\bib{Pan17}{unpublished}{
  author={Panin, I.},
  title={Proof of {G}rothendieck--Serre's conjecture on principal bundles over regular local rings containing a finite field},
  note={arXiv preprint arXiv: 1707.01767},
  url={https://arxiv.org/abs/1707.01767},
  year={2017},
}

\bib{Pan19}{article}{
  author={Panin, I.},
  title={Nice triples and the {G}rothendieck-{S}erre conjecture concerning principal {$\bf G$}-bundles over reductive group schemes},
  journal={Duke Math. J.},
  volume={168},
  year={2019},
  number={2},
  pages={351--375},
  issn={0012-7094},
  doi={10.1215/00127094-2018-0042},
  url={https://doi-org.revues.math.u-psud.fr/10.1215/00127094-2018-0042},
}

\bib{Pan19a}{article}{
    AUTHOR = {Panin, I.},
     TITLE = {Nice triples and the {G}rothendieck-{S}erre conjecture
              concerning principal {$\bf G$}-bundles over reductive group
              schemes},
   JOURNAL = {Duke Math. J.},
    VOLUME = {168},
      YEAR = {2019},
    NUMBER = {2},
     PAGES = {351--375},
      ISSN = {0012-7094},
       DOI = {10.1215/00127094-2018-0042},
       URL = {https://doi-org.revues.math.u-psud.fr/10.1215/00127094-2018-0042},
}

\bib{Pan19b}{article}{
 Author = {Panin, I.},
 Title = {Notes on a {Grothendieck}-{Serre} conjecture in mixed characteristic case},
 Journal = {J. Math. Sci., New York},
 ISSN = {1072-3374},
 Volume = {252},
 Number = {6},
 Pages = {841--848},
 Year = {2021},
 Language = {English},
 DOI = {10.1007/s10958-021-05204-w},
 Keywords = {14L15,14G45,20G05,20G35},
}

\bib{Pan20a}{article}{
 Author = {Panin, I.},
 Title = {Proof of the {Grothendieck}-{Serre} conjecture on principal bundles over regular local rings containing a field},
 Journal = {Izv. Math.},
 ISSN = {1064-5632},
 Volume = {84},
 Number = {4},
 Pages = {780--795},
 Year = {2020},
 Language = {English},
 DOI = {10.1070/IM8982},
 Keywords = {14F20,14L15,20G25,14L10,20G10,20G35},
}

\bib{Par}{article}{
 Author = {Parimala, Raman},
 Title = {Indecomposable quadratic spaces over the affine plane},
 Journal = {Adv. Math.},
 ISSN = {0001-8708},
 Volume = {62},
 Pages = {1--6},
 Year = {1986},
 Language = {English},
 DOI = {10.1016/0001-8708(86)90085-X},
 label = {Par},
}

\bib{PSV15}{article}{
 Author = {Panin, I.},
 author ={Stavrova, A.},
 author ={Vavilov, N.},
 Title = {On {Grothendieck}-{Serre}'s conjecture concerning principal {{\(G\)}}-bundles over reductive group schemes: {I}},
 Journal = {Compos. Math.},
 ISSN = {0010-437X},
 Volume = {151},
 Number = {3},
 Pages = {535--567},
 Year = {2015},
 Language = {English},
 DOI = {10.1112/S0010437X14007635},
 Keywords = {14L15,20G41,20G35,20G99},
}

\bib{PPeSt}{unpublished}{
    author = {Panin, I.},
    author = {Petrov, V.},
    author = {Stavrova, A.},
    title = {On the Grothendieck--Serre's for simple adjoint group schemes of types $E_6$ and $E_7$},
    year = {2009},
    note = {preprint, http://www.math.uiuc.edu/K-theory/.},
    label = {PPeSt},
}

\bib{PSt}{unpublished}{
    author = {Panin, I.},
    author = {Stavrova, A.},
    title = {On principal bundles over the relative projective line},
    year = {2023},
    note = {preprint},
    label = {PanSt},
}

\bib{Po}{article}{
 Author = {Popescu, Dorin},
 Title = {General {N{\'e}ron} desingularization and approximation},
 Journal = {Nagoya Math. J.},
 ISSN = {0027-7630},
 Volume = {104},
 Pages = {85--115},
 Year = {1986},
 Language = {English},
 DOI = {10.1017/S0027763000022698},
 label = {Po},
}

\bib{Poo}{article}{
 Author = {Poonen, Bjorn},
 Title = {Bertini theorems over finite fields},
 Journal = {Ann. Math. (2)},
 ISSN = {0003-486X},
 Volume = {160},
 Number = {3},
 Pages = {1099--1127},
 Year = {2005},
 Language = {English},
 DOI = {10.4007/annals.2004.160.1099},
 label = {Poo},
}

\bib{ChPoo}{article}{
 Author = {Charles, Fran{\c{c}}ois and Poonen, Bjorn},
 Title = {Bertini irreducibility theorems over finite fields},
 Journal = {J. Am. Math. Soc.},
 ISSN = {0894-0347},
 Volume = {29},
 Number = {1},
 Pages = {81--94},
 Year = {2016},
 Language = {English},
 DOI = {10.1090/S0894-0347-2014-00820-1},
 label = {ChPoo},
}

\bib{R1}{article}{
 Author = {Raghunathan, M. S.},
 Title = {Principal bundles admitting a rational section},
 Journal = {Invent. Math.},
 ISSN = {0020-9910},
 Volume = {116},
 Number = {1-3},
 Pages = {409--423},
 Year = {1994},
 Language = {English},
 DOI = {10.1007/BF01231567},
 label = {R1},
}
\bib{R2}{article}{
 Author = {Raghunathan, M. S.},
 Title = {Erratum: {Principal} bundles admitting a rational section},
 Journal = {Invent. Math.},
 ISSN = {0020-9910},
 Volume = {121},
 Number = {1},
 Pages = {223},
 Year = {1995},
 Language = {English},
 DOI = {10.1007/BF01884296},
 label = {R2},
}

\bib{R3}{article}{
 Author = {Raghunathan, M. S.},
 Title = {Principal bundles on affine space and bundles on the projective line},
 Journal = {Math. Ann.},
 ISSN = {0025-5831},
 Volume = {285},
 Number = {2},
 Pages = {309--332},
 Year = {1989},
 Language = {English},
 DOI = {10.1007/BF01443521},
 label = {R3},
}

\bib{RR}{article}{
 Author = {Raghunathan, M. S. },
 author = {Ramanathan, A.},
 Title = {Principal bundles on the affine line},
 Journal = {Proc. Indian Acad. Sci., Math. Sci.},
 ISSN = {0253-4142},
 Volume = {93},
 Pages = {137--145},
 Year = {1984},
 Language = {English},
 DOI = {10.1007/BF02840656},
 label = {RR},
}

\bib{Se}{unpublished}{
    author = {Serre, J.-P.},
    title = {Espaces fibrés algébriques},
    note = {in Anneaux de Chow et applications, Séminaire Chevalley, 2-e année, Sécrétariat mathématique, Paris},
    year = {1958},
    label = {Se},
}

\bib{Sw}{incollection}{
 Author = {Swan, Richard G.},
 Title = {N{\'e}ron-{Popescu} desingularization},
 BookTitle = {Lectures in algebra and geometry. Proceedings of the international conference on algebra and geometry, National Taiwan University, Taipei, Taiwan, December 26--30, 1995},
 ISBN = {1-57146-058-6},
 Pages = {135--192},
 Year = {1998},
 Publisher = {Cambridge, MA: International Press},
 Language = {English},
 label = {Sw},
}

\bib{SP}{misc}{
  shorthand={Stacks},
  author={The {Stacks Project Authors}},
  title={\textit {Stacks Project}},
  howpublished={\url {https://stacks.math.columbia.edu}},
  year={2018},
  label={SP},
}

\bib{T}{article}{
 Author = {Tits, Jacques},
 Title = {Algebraic and abstract simple groups},
 Journal = {Ann. Math. (2)},
 ISSN = {0003-486X},
 Volume = {80},
 Pages = {313--329},
 Year = {1964},
 Language = {English},
 DOI = {10.2307/1970394},
 label = {T},
}

\bib{Vo}{incollection}{
 Author = {Voevodsky, Vladimir},
 Title = {Cohomological theory of presheaves with transfers},
 BookTitle = {Cycles, transfers, and motivic homology theories},
 ISBN = {0-691-04815-0; 0-691-04814-2},
 Pages = {87--137},
 Year = {2000},
 Publisher = {Princeton, NJ: Princeton University Press},
 Language = {English},
 label = {Vo},
}

\bib{Z}{article}{
 Author = {Zainoulline, Kirill},
 Title = {The purity problem for functors with transfers},
 Journal = {\(K\)-Theory},
 ISSN = {0920-3036},
 Volume = {22},
 Number = {4},
 Pages = {303--333},
 Year = {2001},
 Language = {English},
 DOI = {10.1023/A:1011122612924},
 label = {Z},
}

  \end{biblist}
\end{bibdiv}

\end{document}